\newtheorem{theorem}{Theorem}
\newtheorem{lemma}[theorem]{Lemma}
\newtheorem{definition}[theorem]{Definition}
\newtheorem{observation}[theorem]{Observation}
\newtheorem{corollary}[theorem]{Corollary}
\newcommand{\cha}[2]{#1_{ #2}}
\newcommand{\inv}[2]{#1^{\not\sim #2}}
\def\odd{{\scriptstyle{\mathrm{odd}}}}
\def\even{{\scriptstyle{\mathrm{even}}}}
\begin{document}

\title{Not all phylogenetic networks are
leaf-reconstructible\thanks{LvI and MJ were funded in part by the Netherlands
Organization for Scientific Research (NWO), including Vidi grant
639.072.602 and LvI also partly by the 4TU Applied Mathematics Institute. PLE was supported in part by the National Research, Development and Innovation Office --- NKFIH grant K~116769 and KH~126853.}}

\author{P\'eter L. Erd\H{o}s\footnote{Alfr\'ed R{\'e}nyi Institute of Mathematics, Re\'altanoda u 13--15 Budapest, 1053 Hungary,
              erdos.peter@renyi.mta.hu} \and Leo van Iersel\footnote{Delft Institute of Applied Mathematics, Delft University of Technology,
              Van Mourik Broekmanweg 6,
2628XE Delft, The Netherlands,
             \{L.J.J.vanIersel, M.E.L.Jones\}@tudelft.nl} \and Mark Jones\footnotemark[3]}

\maketitle
\begin{abstract}
Unrooted phylogenetic networks are graphs used to represent
evolutionary relationships. Accurately reconstructing such networks is
of great relevance for evolutionary biology. It has recently been
conjectured that all phylogenetic networks with at least five leaves
can be uniquely reconstructed from their subnetworks obtained by
deleting a single leaf and suppressing degree-2 vertices. Here, we
show that this conjecture is false, by presenting a counter example
for each possible number of leaves that is at least~4. Moreover, we
show that the conjecture is still false when restricted to binary networks.
\end{abstract}

{\small {\bf Keywords:} Reconstruction; Phylogenetics }

{\small {\bf AMS Subject Classificiation:} 05C60  Isomorphism problems}

\section{Introduction}

The reconstruction conjecture, introduced in 1941 by Kelly and Ulam
(see~\cite{bh77}), conjectures that each graph with at least three
vertices is uniquely reconstructable from its multiset of
vertex-deleted subgraphs. Despite more than seven decades of research,
the conjecture is still open.

Recently, a variant of this conjecture was introduced that is relevant
for the field of \emph{phylogenetics}, the study of evolutionary
relationships. Such relationships among a set~$X$ of entities (e.g.
biological species or languages) are traditionally described by a tree
with no degree-2 vertices and its leaves bijectively labelled by the
elements of~$X$; this is called a \emph{phylogenetic tree} on~$X$.
More recently, evolutionary histories are more and more often
described by
phylogenetic \emph{networks}~\cite{expanding}, which are basically
(directed or undirected) graphs with their leaves bijectively labelled
by the elements of~$X$. These networks are able to describe more
complex evolutionary relationships than trees.

To find out whether it may be possible to accurately reconstruct phylogenetic networks, an important question to answer is which substructures uniquely define a phylogenetic network. For example, although there is quite some research directed at reconstructing rooted phylogenetic networks from embedded trees (see e.g.~\cite{kernel,maf}), these trees do not uniquely define a network (see e.g.~\cite{distinguish}). Hence, no method based on embedded
trees can guarantee to reconstruct the right network, even when it gets error-free and complete trees as input. Moreover, it has recently been shown that rooted phylogenetic networks also cannot be reconstructed uniquely from their subnetworks obtained by deleting one or more leaves and transforming the result into a valid rooted phylogenetic network~\cite{HvIMW2015}. A similar reconstruction question for pedigrees has also been answered negatively~\cite{thatte06}.

Here, we focus on \emph{unrooted phylogenetic networks}, which are
(undirected) graphs for which holds that contracting each
2-edge-connected component into a single vertex gives a phylogenetic
tree on~$X$. A recent paper~\cite{vIM2017} studied reconstructing such
networks from their $X$-deck, which consists of the graphs obtained by
deleting a single leaf from the network and suppressing its former
neighbour if its degree is~2, see Figure~\ref{fig:intro} for an
example. Several promising results were obtained, including a proof that all phylogenetic trees and all decomposable networks (i.e. networks with some cut-edge that is not incident to leaf) are reconstructable from their $X$-deck, as well as all networks with $|E|-|V|\leq 3$ and all networks with sufficiently many leaves (relative to $|E|-|V|$), all assuming that~$|X|\geq 5$. It was conjectured that all unrooted phylogenetic networks on~$X$, with~$|X|\geq 5$, can be uniquely reconstructed from their $X$-deck.

\begin{figure}
   \tikzset{lijn/.style={very thick}}
\centering\begin{tikzpicture}[ very thick, >= triangle 60]

\begin{scope}[xshift=-.5cm,yshift=-1cm]
\draw[very thick, fill, radius=0.09] (.5,0) circle node[left] {$x_1$};
\draw[very thick, fill, radius=0.09] (.5,-1) circle node[left] {$x_2$};
\draw[very thick, fill, radius=0.09] (2.5,1) circle node[above] {$x_3$};
\draw[very thick, fill, radius=0.09] (2.5,-2) circle node[below] {$x_4$};
\draw[very thick, fill, radius=0.09] (1,-.5) circle;
\draw[very thick, fill, radius=0.09] (1.5,-.5) circle;
\draw[very thick, fill, radius=0.09] (2,0) circle;
\draw[very thick, fill, radius=0.09] (2,-1) circle;
\draw[very thick, fill, radius=0.09] (2.5,-1.5) circle;
\draw[very thick, fill, radius=0.09] (2.5,.5) circle;
\draw[lijn] (.5,0) -- (1,-.5);
\draw[lijn] (.5,-1) -- (1,-.5);
\draw[lijn] (1.5,-.5) -- (1,-.5);
\draw[lijn] (1.5,-.5) -- (2.5,.5);
\draw[lijn] (1.5,-.5) -- (2.5,-1.5);
\draw[lijn] (2.5,.5) -- (2.5,-1.5);
\draw[lijn] (2.5,.5) -- (2.5,1);
\draw[lijn] (2.5,-2) -- (2.5,-1.5);
\draw[lijn] (2,-1) -- (2,0);
\draw (1.5,-3) node {$N$};
 \end{scope}
 \begin{scope}[xshift=4.5cm,yshift=0cm]
\draw[very thick, fill, radius=0.09] (2.5,1) circle node[above] {$x_3$};
\draw[very thick, fill, radius=0.09] (2.5,-2) circle node[below] {$x_4$};
\draw[very thick, fill, radius=0.09] (1,-.5) circle node[left] {$x_2$};
\draw[very thick, fill, radius=0.09] (1.5,-.5) circle;
\draw[very thick, fill, radius=0.09] (2,0) circle;
\draw[very thick, fill, radius=0.09] (2,-1) circle;
\draw[very thick, fill, radius=0.09] (2.5,-1.5) circle;
\draw[very thick, fill, radius=0.09] (2.5,.5) circle;
\draw[lijn] (1.5,-.5) -- (1,-.5);
\draw[lijn] (1.5,-.5) -- (2.5,.5);
\draw[lijn] (1.5,-.5) -- (2.5,-1.5);
\draw[lijn] (2.5,.5) -- (2.5,-1.5);
\draw[lijn] (2.5,.5) -- (2.5,1);
\draw[lijn] (2.5,-2) -- (2.5,-1.5);
\draw[lijn] (2,-1) -- (2,0);
 \end{scope}
 \begin{scope}[xshift=8.5cm,yshift=0cm]
\draw[very thick, fill, radius=0.09] (2.5,1) circle node[above] {$x_3$};
\draw[very thick, fill, radius=0.09] (2.5,-2) circle node[below] {$x_4$};
\draw[very thick, fill, radius=0.09] (1,-.5) circle node[left] {$x_1$};
\draw[very thick, fill, radius=0.09] (1.5,-.5) circle;
\draw[very thick, fill, radius=0.09] (2,0) circle;
\draw[very thick, fill, radius=0.09] (2,-1) circle;
\draw[very thick, fill, radius=0.09] (2.5,-1.5) circle;
\draw[very thick, fill, radius=0.09] (2.5,.5) circle;
\draw[lijn] (1.5,-.5) -- (1,-.5);
\draw[lijn] (1.5,-.5) -- (2.5,.5);
\draw[lijn] (1.5,-.5) -- (2.5,-1.5);
\draw[lijn] (2.5,.5) -- (2.5,-1.5);
\draw[lijn] (2.5,.5) -- (2.5,1);
\draw[lijn] (2.5,-2) -- (2.5,-1.5);
\draw[lijn] (2,-1) -- (2,0);
 \end{scope}
 \begin{scope}[xshift=4.5cm,yshift=-3cm]
\draw[very thick, fill, radius=0.09] (.5,0) circle node[left] {$x_1$};
\draw[very thick, fill, radius=0.09] (.5,-1) circle node[left] {$x_2$};
\draw[very thick, fill, radius=0.09] (3,-.5) circle node[right] {$x_4$};
\draw[very thick, fill, radius=0.09] (1,-.5) circle;
\draw[very thick, fill, radius=0.09] (1.5,-.5) circle;
\draw[very thick, fill, radius=0.09] (2,0) circle;
\draw[very thick, fill, radius=0.09] (2,-1) circle;
\draw[very thick, fill, radius=0.09] (2.5,-.5) circle;
\draw[lijn] (.5,0) -- (1,-.5);
\draw[lijn] (.5,-1) -- (1,-.5);
\draw[lijn] (1.5,-.5) -- (1,-.5);
\draw[lijn] (1.5,-.5) -- (2,0);
\draw[lijn] (1.5,-.5) -- (2,-1);
\draw[lijn] (2,-1) -- (2.5,-.5);
\draw[lijn] (2,0) -- (2.5,-.5);
\draw[lijn] (3,-.5) -- (2.5,-.5);
\draw[lijn] (2,-1) -- (2,0);
\draw (3.5,-2) node {$X$-deck of $N$};
 \end{scope}
\begin{scope}[xshift=8.5cm,yshift=-3cm]
\draw[very thick, fill, radius=0.09] (.5,0) circle node[left] {$x_1$};
\draw[very thick, fill, radius=0.09] (.5,-1) circle node[left] {$x_2$};
\draw[very thick, fill, radius=0.09] (3,-.5) circle node[right] {$x_3$};
\draw[very thick, fill, radius=0.09] (1,-.5) circle;
\draw[very thick, fill, radius=0.09] (1.5,-.5) circle;
\draw[very thick, fill, radius=0.09] (2,0) circle;
\draw[very thick, fill, radius=0.09] (2,-1) circle;
\draw[very thick, fill, radius=0.09] (2.5,-.5) circle;
\draw[lijn] (.5,0) -- (1,-.5);
\draw[lijn] (.5,-1) -- (1,-.5);
\draw[lijn] (1.5,-.5) -- (1,-.5);
\draw[lijn] (1.5,-.5) -- (2,0);
\draw[lijn] (1.5,-.5) -- (2,-1);
\draw[lijn] (2,-1) -- (2.5,-.5);
\draw[lijn] (2,0) -- (2.5,-.5);
\draw[lijn] (3,-.5) -- (2.5,-.5);
\draw[lijn] (2,-1) -- (2,0);
 \end{scope}
\end{tikzpicture}
\caption{Example of a phylogenetic network~$N$ and its $X$-deck.} \label{fig:intro}
\end{figure}
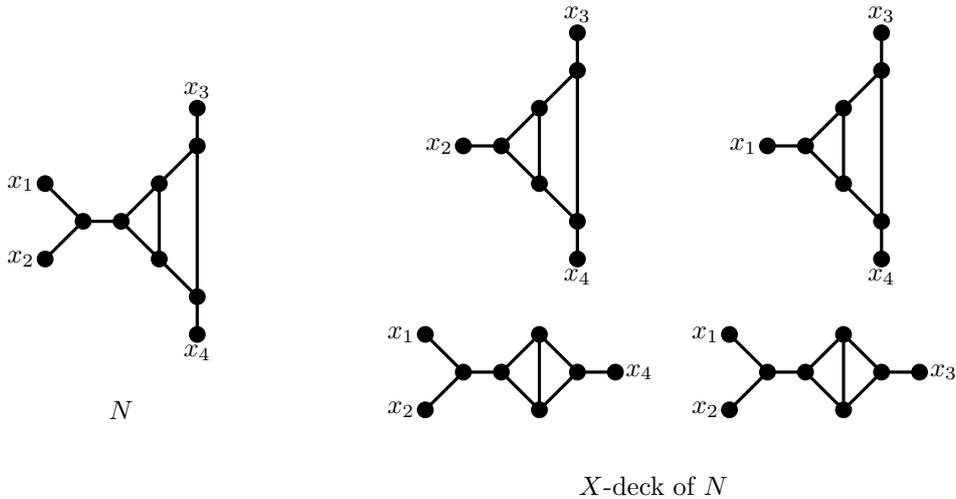

Here, we show that this conjecture is false. To do so, we present, for
each finite set~$X$ containing at least four elements, two unrooted
phylogenetic networks on~$X$ that are not isomorphic but have the same
$X$-deck. Moreover, we also give binary networks with these properties, hence showing that the conjecture restricted to binary networks is still false.

This result may have consequences for developing ``supernetwork'' methods, which attempt to reconstruct phylogenetic networks from subnetworks. Supertree methods work well for phylogenetic trees, which can be explained from the fact that a phylogenetic tree is uniquely determined by its induced set of four-leaved trees (or three-leaved trees in the case of rooted trees). Since phylogenetic networks are not uniquely determined by their subnetworks, developing supernetwork methods will be significantly more challenging than in the tree-case.

The structure of the paper is as follows. We start off by giving formal definitions related to phylogenetic networks and binary sequences, which are central to the construction of our counter examples. Then, in Section~\ref{sec:nonBinary}, we present our counter examples for the general, non-binary case. Finally, in Section~\ref{sec:binary} we show how these can be transformed into counter examples for the binary case.

\section{Preliminaries}\label{sec:prelim}
A \emph{phylogenetic tree} on $X$ is an undirected simple tree, with no degree-$2$ vertices, such that each leaf is bijectively labelled by an element from $X$.  A \emph{biconnected component} of a graph is a maximal $2$-edge-connected subgraph and it is called a \emph{blob} if it contains at least two edges.  Let $X$ be a finite set with $|X| \geq 2$, and let $N$ be an undirected simple graph in which the \emph{leaves} (degree-$1$ vertices) are bijectively labelled by the elements of $X$.  We say $N$ is an \emph{unrooted phylogenetic network} on $X$  if contracting each blob into a single vertex gives phylogenetic tree (or equivalently, each cut-edge induces a unique partition of the leaves).  In addition, we say that $N$ is \emph{binary} if every vertex has degree $1$ or $3$.  In what follows, we will refer to unrooted phylogenetic networks as \emph{networks} for short.

Let $G$ and $H$ be two partially labelled undirected multigraphs with the same label set, such that $|V(G)| = |V(H)|$. Let $f: V(G) \rightarrow V(H)$ be a bijective function. We say that $f$ is an \emph{isomorphism} between $G$ and $H$ if it is both label-preserving (that is, vertex $a \in V(U)$ has label $l$ if and only if $f(a)$ has label $l$) and edge-preserving (that is, for any $a,b \in V(G)$ the number of edges between $a$ and $b$ in $G$ is equal to the number of edges between $f(a)$ and $f(b)$ in $H$). We say $G$ and $H$ are \emph{equivalent}, denoted $G \sim H$, if there is an isomorphism between $G$ and $H$.

Given an undirected multigraph $G$ with no vertices of degree $2$, and a vertex $a \in V(G)$, we denote by $G_a$ the undirected multigraph derived from $G$ by deleting $a$ and all incident edges, and then suppressing any degree-$2$ vertices. We say $G_a$ is derived from $G$ by \emph{removing} the vertex $a$. For a label $x$, we may write $G_{x}$ to refer to $G_{a}$, where $a$ is the unique vertex in $G$ with label $x$.

Given a network $N$ on $X$, an $X$-reconstruction of $N$ is a network $N'$ on $X$ such that $N'_x \sim N_x$ for all $x \in X$. We call a phylogenetic network $N$ \emph{leaf-reconstructible} if $N' \sim N$ for  every $X$-reconstruction $N'$ of $N$.  That is, all $X$-reconstructions of $N$ are isomorphic to each other.

It was conjectured in~\cite{vIM2017} that all unrooted phylogenetic networks with $5$ or more leaves are leaf-reconstructible. (We note that phylogenetic trees on $5$ or more leaves are leaf-reconstructible, as it is clearly possible to reconstruct every quartet in the tree.)

In this paper, we show that the conjecture is false.  More precisely, we will show that for each $r \geq 4$, there exist binary unrooted phylogenetic networks $N$ and $N'$ on $X$ with $|X|=r$, such that $N \not\sim N'$, but $N_x \sim N'_x$ for all $x \in X$. Thus, $N$ and $N'$ are not leaf-reconstructible. \footnote{It was previously known that networks on $r=4$ leaves are not leaf-reconstructible in general. We nevertheless include the case $r = 4$ in our paper, as it allows us to  give simpler figures than for the $r = 5$ case.}

Finally, for an integer $k$, let $[k]$ denote the set $\{1,2,\dots, k\}$.

\subsection{Binary sequences}\label{subsec:sequences}
Given an alphabet $\Sigma$, let $w \in \Sigma^*$ be a sequence of elements with elements drawn from $\Sigma$. If $\Sigma = \{0,1\}$ then we call $w$ a \emph{binary sequence}.  The \emph{length} of the sequence $w$, denoted $l(w)$, is the number of elements in $w$. We write $\cha{w}{i}$ to denote the $i$'th element of $w$.  We often write $e_1e_2\dots e_l$ to denote the sequence $w$ such that $l(w) = l$ and $\cha{w}{i} = e_i$ for each $i \in [l]$.  (Thus, for example, $1011$ denotes the length-$4$ binary sequence whose second element is $0$ and whose first, third and fourth elements are $1$.)  Given a binary sequence $w$, the \emph{weight} of $w$ is the number of $1$'s in $w$.
For an integer $l$, we write ${\cal B}_l$ to denote the set of binary sequences of length $l$. Given a sequence $w \in {\cal B}_r$  and $i \in [r]$, let $\inv{w}{i}$ be the sequence derived from $w$ by replacing the $i$'th element with $1 - \cha{w}{i}$ (for example, if $w = 1001$ and $i = 3$, then $\inv{w}{i} = 1011$).

Central to the proof of our result is the idea that for a binary sequence $w$, one needs to know all elements of $w$ in order to decide  whether $w$ has odd or even weight. (Note that here and in the rest of the paper, we consider a sequence of weight $0$  to have even weight.) For some integer $r$, consider the set $S^{\even}$ of all length-$r$ binary words of even weight, and the set $S^{\odd}$ of all length-$r$ binary words of odd weight.  Given a length-$r$ binary sequence $w$ and integer $i \in [r]$, let $w^{-i}$ denote the sequence on $\{0,1,*\}$ derived from $w$ by replacing the $i$'th element with $*$.
Then for each $w \in S^{\even}$, there exists a sequence $w' \in S^{\odd}$ such that $(w')^{-i} = w^{-i}$  (indeed, $\inv{w}{i}$ is such a sequence).
For a set of sequences $S$ and $i \in [r]$, let $S^{-i} = \{w^{-i}; w \in S\}$. Then it follows that for each $i \in [r]$, the sets $({S^\odd})^{-i}$ and $({S^\even})^{-i}$
are the same.

We will use this concept to guide our construction of two networks $N^{\even}$ and $N^{\odd}$ on a set $X = \{x_1, \dots, x_r\}$. Roughly speaking, $N^{\even}$ can be thought of as a representation of $S^{\even}$, and $N^{\odd}$ can be thought of as a representation of $S^{\odd}$. Then for each $i \in [r]$,  $(N^{\even})_{x_i}$ corresponds to $(S^{\even})^{-i}$, and $(N^{\odd})_{x_i}$ corresponds to $(S^{\odd})^{-i}$. Just as $(S^{\even})^{-i} = (S^{\odd})^{-i}$, we will be able to show that $(N^{\even})_{x_i}$ and $(N^{\odd})_{x_i}$ are equivalent, while originally  $N^{\even}$ and $N^{\odd}$ are different.

\section{Non-binary example}\label{sec:nonBinary}

In order to demonstrate the main concepts of our construction, we first give a construction using non-binary graphs.
In the next section, we will construct an example with binary phylogenetic networks, using these non-binary graphs as a guide.

For some integer $r \geq 4$, let $X$ denote the set of labels $\{x_1, \dots x_r\}$. We will construct two graphs $M^{\even}$ and $M^{\odd}$, in which the leaves are bijectively labelled by the elements of $X$. As in the previous section, let $S^{\even}$ denote the set of all length-$r$ binary words of even weight, and let $S^{\odd}$ denote the set of all length-$r$ binary words of odd weight.

The graph $M^{\even}$ is constructed as follows. For each $i \in [r]$, let $M^{\even}$ contain vertices $v_{i,0}$ and $v_{i,1}$, and a leaf labelled with $x_i$, such that $x_i$ is adjacent to $v_{i,0}$.\footnote{We note that in this section and next, we will often give names to particular vertices in the graphs we construct. This is done to differentiate between vertices, in order to aid in the description of the construction and help define isomorphisms. However, this is not the same as labelling the vertices; the only labelling that will occur is the labelling of leaves with elements of $X$.}
For each $w \in S^{\even}$, let $M^{\even}$ contain a vertex $u_w$. For each $w \in S^{\even}$ and $i \in [r]$, let $u_w$ be adjacent to $v_{i,0}$ if $\cha{w}{i} = 0$, and let $u_w$ be adjacent to $v_{i,1}$ if $\cha{w}{i} = 1$.
This completes the construction of $M^{\even}$. (See Figure~\ref{fig:NEvenNonBinary}.)

The construction of $M^{\odd}$ is identical to that of $M^{\even}$, except that we have a node $u_w$ for each $w \in S^{\odd}$ rather than each $w \in S^{\even}$. For completeness, the full construction is as follows:
For each $i \in [r]$, let $M^{\odd}$ contain vertices $v_{i,0}$ and $v_{i,1}$, and a leaf labelled with $x_i$, such that $x_i$ is adjacent to $v_{i,0}$.
For each $w \in S^{\odd}$, let $M^{\odd}$ contain a vertex $u_w$.
For each $w \in S^{\odd}$ and $i \in [r]$, let $u_w$ be adjacent to $v_{i,0}$ if $\cha{w}{i} = 0$, and let $u_w$ be adjacent to $v_{i,1}$ if $\cha{w}{i} = 1$. This completes the construction of $M^{\odd}$. (See Figure~\ref{fig:NOddNonBinary}.)

\begin{figure}[h]
\begin{subfigure}{\textwidth}
  \tikzstyle{black}=[circle, fill=black, minimum size=1mm, radius=0.09, inner sep = 2.3]
 \centering\begin{tikzpicture}[ very thick, >= triangle 60, scale = 0.8]
\foreach \eps in {0.45}
{
 \foreach \xi in {1,2,3,4}
 {
  \node at (4*\xi-3.5,1)(root\xi0)[black]{};
  \node at (4*\xi-3.5-1.5*\eps,1){$v_{\xi,0}$};
  \node at (4*\xi-2.5,1)(root\xi1)[black]{};
  \node at (4*\xi-2.5,1-\eps){$v_{\xi,1}$};
  \node at (4*\xi-3.5,0)[black]{}
   edge (root\xi0);
  \node at (4*\xi-3.5,0-\eps){$x_{\xi}$};
 }
 \foreach \ha in {0,1}
 \foreach \hb in {0,1}
 \foreach \hc in {0,1}
 \foreach \hd in {0,1}
 {
 \ifthenelse{\intcalcMod{\ha + \hb + \hc + \hd}{2} = 0}{
  \node at (8*\ha + 4*\hb + 2*\hc,6)(root)[black]{}
  edge (root1\ha)
  edge (root2\hb)
  edge (root3\hc)
  edge (root4\hd)
  ;
  \node at (8*\ha + 4*\hb + 2*\hc,6+\eps)(root){$u_{\ha \hb \hc \hd}$};
 }{}
 }
}
\end{tikzpicture}
\caption{$M^{\even}$}\label{fig:NEvenNonBinary}
\end{subfigure}

\begin{subfigure}{\textwidth}
\label{fig:hardnessGadgetForX}
  \tikzstyle{black}=[circle, fill=black, minimum size=4pt, radius=0.09, inner sep = 2.3]
  \tikzstyle{red}=[square, fill=red, minimum size=4pt]
  \tikzstyle{demand}=[blue, font = \Large]
  \tikzstyle{zeroDemand}=[blue]
\centering\begin{tikzpicture}[ very thick, >= triangle 60, scale = 0.8]
\foreach \eps in {0.45}
{
 \foreach \xi in {1,2,3,4}
 {
  \node at (4*\xi-3.5,1)(root\xi0)[black]{};
  \node at (4*\xi-3.5-1.5*\eps,1){$v_{\xi,0}$};
  \node at (4*\xi-2.5,1)(root\xi1)[black]{};
  \node at (4*\xi-2.5,1-\eps){$v_{\xi,1}$};
  \node at (4*\xi-3.5,0)[black]{}
   edge (root\xi0);
  \node at (4*\xi-3.5,0-\eps){$x_{\xi}$};
 }
 \foreach \ha in {0,1}
 \foreach \hb in {0,1}
 \foreach \hc in {0,1}
 \foreach \hd in {0,1}
 {
 \ifthenelse{\intcalcMod{\ha + \hb + \hc + \hd}{2} = 1}{
  \node at (8*\ha + 4*\hb + 2*\hc,6)(root)[black]{}
  edge (root1\ha)
  edge (root2\hb)
  edge (root3\hc)
  edge (root4\hd)
  ;
  \node at (8*\ha + 4*\hb + 2*\hc,6+\eps)(root){$u_{\ha \hb \hc \hd}$};
 }{}
 }
}
\end{tikzpicture}
\caption{$M^{\odd}$}\label{fig:OddNonBinary}
\end{subfigure}
 \caption{ Non-binary example of  $M^{\even}$ and $M^{\odd}$  for the case when when $r=4$. Nodes $u_w$ are adjacent to nodes $v_{i,h}$ if and only if $\cha{w}{i} = h$.}
\end{figure}
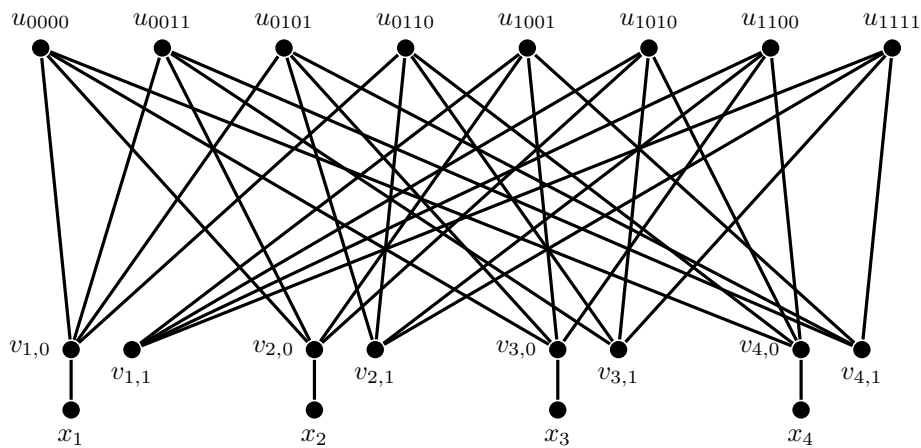
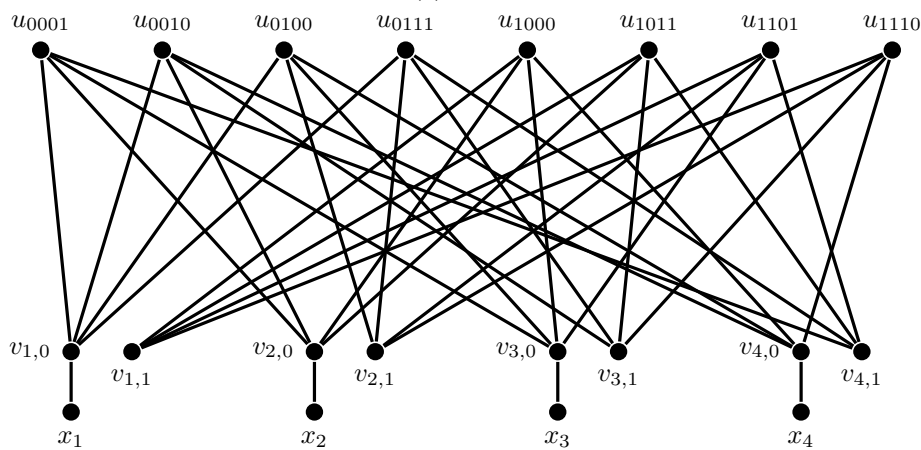

\bigskip

\begin{lemma}
 $M^{\even}$ and $M^{\odd}$ are not equivalent.
\end{lemma}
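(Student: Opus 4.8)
The plan is to exhibit a graph-theoretic property, invariant under label-preserving isomorphism, that holds in $M^{\even}$ but not in $M^{\odd}$. I would first note that a crude degree count does \emph{not} suffice: for each $i$ the vertex $v_{i,0}$ has degree $1+2^{r-2}$ and $v_{i,1}$ has degree $2^{r-2}$ in \emph{both} graphs, while every $u_w$ has degree $r$ and every leaf degree $1$, so $M^{\even}$ and $M^{\odd}$ have identical degree sequences. The labelling must therefore be used in an essential way.

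The key first step is to pin down the images of the $v_{i,0}$. Suppose, for contradiction, that $f$ is an isomorphism from $M^{\even}$ to $M^{\odd}$. Each leaf $x_i$ has degree $1$, and its unique neighbour is $v_{i,0}$. Since $f$ preserves labels we have $f(x_i)=x_i$, and since $f$ preserves edges it must send the unique neighbour of $x_i$ in $M^{\even}$ to the unique neighbour of $x_i$ in $M^{\odd}$; that is, $f(v_{i,0})=v_{i,0}$ for every $i\in[r]$. This is the crucial leverage provided by the labelled leaves.

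Next I would identify the distinguishing property. In $M^{\even}$ consider the vertex $u_w$ for the all-zero sequence $w=0\cdots0$, which lies in $S^{\even}$ precisely because weight $0$ counts as even. By construction $u_{0\cdots0}$ is adjacent to $v_{i,0}$ for every $i\in[r]$, so it is a non-leaf vertex adjacent to all of $v_{1,0},\dots,v_{r,0}$ simultaneously. The claim is that $M^{\odd}$ contains no such vertex. Indeed, the only neighbours of any $v_{i,0}$ are the leaf $x_i$ together with the vertices $u_w$ satisfying $\cha{w}{i}=0$ (the $v$-vertices are pairwise non-adjacent, so no $v_{j,1}$ can be adjacent to a $v_{i,0}$). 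A leaf is adjacent to a single $v_{i,0}$, and a vertex $u_w$ is adjacent to every $v_{i,0}$ only if $\cha{w}{i}=0$ for all $i$, i.e. only if $w=0\cdots0$. But $0\cdots0\notin S^{\odd}$, so no $u_w$ of $M^{\odd}$ has this property.

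Finally I would close the argument: since $f$ fixes each $v_{i,0}$ and preserves adjacency, $f(u_{0\cdots0})$ would have to be a vertex of $M^{\odd}$ adjacent to all of $v_{1,0},\dots,v_{r,0}$, contradicting the previous paragraph. Hence no isomorphism can exist and $M^{\even}\not\sim M^{\odd}$. The only delicate point---and the main obstacle---is making sure the invariant is genuinely \emph{forced} rather than merely suggestive; this is exactly what fixing the $v_{i,0}$ via the labelled leaves accomplishes. Once that is in hand, the all-zero word, which is the unique feature separating even from odd weight for this all-$0$ coordinate pattern, does the rest.
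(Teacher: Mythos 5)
Your proof is correct and follows essentially the same strategy as the paper's: both isolate the vertex $u_{\mathbf 0}$ of the all-zero word and show that no vertex of $M^{\odd}$ can play its role relative to the labelled leaves. Your packaging --- first pinning down $f(v_{i,0})=v_{i,0}$ and then comparing common neighbourhoods --- is a slightly more direct version of the paper's argument, which phrases the same invariant as ``distance $2$ from every leaf'' and rules out other candidate images by a parity-of-distances observation; the underlying idea is identical.
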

\begin{proof}
Suppose for a contradiction that $M^{\even}$ and $M^{\odd}$ are equivalent, and let $f:V(M^{\even}) \rightarrow V(M^{\odd})$ be an isomorphism between $M^{\even}$ and $M^{\odd}$. Let  ${\bf 0}$ denote the all-$0$ sequence from $S^{\even}$. Observe that for each $i \in [r]$, the distance between $u_{\bf 0}$ and $x_i$ is $2$ (as both $u_{\bf 0}$ and $x_i$ are adjacent to $v_{i,0}$). It follows that $f(u_{\bf 0})$ must have distance $2$ to $f(x_i) = x_i$ in  $M^{\odd}$ , for each $i \in [r]$. We will show that no such $f(u_{\bf 0})$ exists in $M^{\odd}$, a contradiction to the existence of $f$.

Observe that by construction of $M^{\odd}$ (in particular, the fact that it is a bipartite graph with one side consisting of vertices $v_{j,0}$ or $v_{j,1}$), the distance between any leaf $x_i$ and any vertex $v_{j,0}$ or $v_{j,1}$ is odd. It follows that $f(u_{\bf 0})$ must be the vertex $u_w$, for some $w \in S^{\odd}$ (any other vertex is either a leaf, which has distance $0$ from itself, or has odd distance from any leaf). However, for any $w \in S^{\odd}$ there exists $i \in [r]$ such that $\cha{w}{i} = 1$, and so $u_w$ is not adjacent to $v_{i,0}$.  As $v_{i,0}$ is the only vertex adjacent to $x_i$, it follows that the distance between $u_w$ and $x_i$ is greater than $2$, and so $f(u_{\bf 0}) \neq u_w$.

As there is no choice for  $f(u_{\bf 0})$ that satisfies the conditions of an isomorphism, we have that there is no possible isomorphism between $M^{\even}$ and $M^{\odd}$, and so $M^{\even}$ and $M^{\odd}$ are not equivalent.
\end{proof}

\begin{lemma}
For each $i \in [r]$,  $(M^{\even})_{x_i} \sim (M^{\odd})_{x_i}$.
\end{lemma}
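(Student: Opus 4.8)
The plan is to construct an explicit isomorphism $g: V((M^{\even})_{x_i}) \rightarrow V((M^{\odd})_{x_i})$, guided by the bijection $w \mapsto \inv{w}{i}$ on binary sequences. Recall from the preliminaries that $(S^{\even})^{-i} = (S^{\odd})^{-i}$: the map sending $w$ to $\inv{w}{i}$ is a bijection between $S^{\even}$ and $S^{\odd}$ that preserves every coordinate except the $i$'th. I would first carefully describe the structure of $(M^{\even})_{x_i}$. Deleting the leaf $x_i$ leaves its neighbour $v_{i,0}$ with reduced degree; since $v_{i,0}$ was adjacent to $x_i$ together with all vertices $u_w$ having $\cha{w}{i}=0$, after deletion it need not become degree-$2$ in general, so I would check which vertices actually get suppressed. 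The key observation is that in $(M^{\even})_{x_i}$ (respectively $(M^{\odd})_{x_i}$) the two vertices $v_{i,0}$ and $v_{i,1}$ now play symmetric, interchangeable roles, because the $i$'th coordinate no longer carries any information that is ``anchored'' by a leaf.

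The main step is to define $g$ so that it fixes every leaf $x_j$ (with $j \ne i$) and every vertex $v_{j,h}$ with $j \ne i$, sends $u_w \mapsto u_{\inv{w}{i}}$, and handles the vertices $v_{i,0}, v_{i,1}$ appropriately (either fixing them or swapping them, whichever makes the edges match). I then verify that $g$ is edge-preserving. For an edge from $u_w$ to $v_{j,h}$ with $j \ne i$: since $\inv{w}{i}$ agrees with $w$ in coordinate $j$, we have $\cha{(\inv{w}{i})}{j} = \cha{w}{j} = h$, so $u_{\inv{w}{i}}$ is adjacent to $v_{j,h}$ in $M^{\odd}$, and these vertices are fixed by $g$ — so the edge is preserved. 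For an edge incident to $v_{i,0}$ or $v_{i,1}$, flipping the $i$'th coordinate means $u_w$'s attachment switches between $v_{i,0}$ and $v_{i,1}$; I would set $g$ to swap $v_{i,0}$ and $v_{i,1}$ precisely so that this switch is absorbed, making the edge adjacencies match again. The leaves and all other vertices $v_{j,h}$ are fixed and have identical neighbourhoods under the correspondence, so all remaining edges are preserved.

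The subtle part, and the one I expect to be the main obstacle, is bookkeeping around the suppression of degree-$2$ vertices: I must confirm that $(M^{\even})_{x_i}$ and $(M^{\odd})_{x_i}$ have the \emph{same} vertex set after suppression (so that a bijection even exists), and that no degree-$2$ suppression occurs asymmetrically between the two graphs. In particular I need to check the degrees of $v_{i,0}$ and $v_{i,1}$ after deleting $x_i$: whether either drops to $2$ and gets suppressed depends on how many sequences of each parity have a $0$ or $1$ in coordinate $i$, and I would verify that these counts match up between $M^{\even}$ and $M^{\odd}$ (here the symmetry $w \leftrightarrow \inv{w}{i}$ guarantees equal counts). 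Once it is established that the suppression behaviour is identical on both sides, the bijection $g$ described above is well-defined and the edge-preservation check above shows it is an isomorphism, giving $(M^{\even})_{x_i} \sim (M^{\odd})_{x_i}$.
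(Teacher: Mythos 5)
Your proposal is correct and follows essentially the same route as the paper: the paper likewise verifies that no degree-$2$ suppression occurs (each $v_{i,h}$ retains $2^{r-2}\geq 4$ neighbours and each $u_w$ has $r\geq 4$), then defines the isomorphism by $u_w \mapsto u_{\inv{w}{i}}$, swapping $v_{i,0}$ with $v_{i,1}$ and fixing all other vertices, with the same case-by-case edge-preservation check.
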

\begin{proof}
Observe that $v_{i,0}$ and $v_{i,1}$ each have $2^{r-2} \geq 4$ neighbors in $M^{\even}$ not including $x_i$ (as $|S^{\even}| = 2^{r-1}$ and exactly half of the sequences in $S^{\even}$ have $1$ as their $i$'th element). Also any vertex $u_w$ has $r \geq 4$ neighbors in $M^{\even}$. It follows that if $x_i$ is deleted from $M^{\even}$, the remaining graph has no vertices of degree $2$, and thus  $(M^{\even})_{x_i}$ is exactly $M^{\even}$ with $x_i$ deleted.
By a similar argument, $(M^{\odd})_{x_i}$ is exactly $M^{\odd}$ with $x_i$ deleted.

Now define a bijective function $f: V((M^{\even})_{x_i}) \rightarrow V((M^{\odd})_{x_i})$ as follows. For each $w \in S^{\even}$, let $f(u_w) = u_{\inv{w}{i}}$. Observe that this defines a bijection between $\{u_w: w \in S^{\even}\}$ and  $\{u_w: w \in S^{\odd}\}$. Let $f(v_{i,0}) = v_{i,1}$ and $f(v_{i,1}) = v_{i,0}$. For $j \in [r]\setminus \{i\}$, let $f(v_{j,0}) = v_{j,0}, f(v_{j,1}) = v_{j,1}$ and $f(x_j) = x_j$ (recall that the leaf $x_i$ does not appear in $(M^{\even})_{x_i}$ or $(M^{\odd})_{x_i}$, so we do not need to define $f(x_i)$).

By construction, $f$ is a bijective function from $V((M^{\even})_{x_i})$ to $V((M^{\odd})_{x_i})$. It remains to show that $f$ is label-preserving and edge-preserving. As $f$ is the identity on all labelled vertices, $f$ is label-preserving. As $(M^{\even})_{x_i}$ and $(M^{\odd})_{x_i}$ are simple graphs, to show that $f$ is edge-preserving it is enough to show that two vertices $a,b$ are adjacent in $(M^{\even})_{x_i}$ if and only if $f(a)$ and $f(b)$ are adjacent in $(M^{\odd})_{x_i}$.

So consider any $a,b \in V((M^{\even})_{x_i})$.
Suppose first that $a = u_w$ for some $w \in S^{\even}$ and that $b = v_{j,h}$ for some $j \in [r]\setminus\{i\}$ and $h \in \{0,1\}$. Then $a$ and $b$ are adjacent if and only if $\cha{w}{j} = h$. Furthermore $f(a) = u_{\inv{w}{i}}$ where $\cha{\inv{w}{i}}{j} = \cha{w}{j}$, and $f(a)$ and $f(b) = v_{j,h}$ are adjacent if and only if $\cha{\inv{w}{i}}{j} = h$. Putting it together, we have that $ab \in E((M^{\even})_{x_i}) \Leftrightarrow \cha{w}{j} = h \Leftrightarrow \cha{\inv{w}{i}}{j}=h \Leftrightarrow f(a)f(b) \in E((M^{\even})_{x_i})$. Thus $a$ and $b$ are adjacent if and only $f(a)$ and $f(b)$  are adjacent.

Next suppose that $a = u_w$ for some $w \in S^{\even}$ and that $b = v_{i,h}$ for some and $h \in \{0,1\}$. Then $a$ and $b$ are adjacent if and only if $\cha{w}{i} = h$. Furthermore $f(a) = u_{\inv{w}{i}}$ where $\cha{\inv{w} {i}}{i} = 1 - \cha{w}{i}$, and $f(a)$ and $f(b) = v_{i,1-h}$ are adjacent if and only if $\cha{\inv{w}{i}}{i} = 1-h$. Thus $ab \in E((M^{\even})_{x_i}) \Leftrightarrow \cha{w}{j} = h \Leftrightarrow \cha{\inv{w}{i}}{j}= 1- h \Leftrightarrow f(a)f(b) \in E((M^{\even})_{x_i})$.

If $a$ and $b$ are $u_w, u_{w'}$ for some $w,w' \in S^{\even}$, then $a$ and $b$ are not adjacent, and neither are $f(a)$ and $f(b)$ (which are both vertices $u_{w''},u_{w'''}$ for some $w',w'' \in S^{\odd}$). By a similar argument, if $a$ and $b$ are both vertices $v_{j,h}$ for some $j \in [r]$ and $h \in \{0,1\}$, then $a,b$ are not adjacent and $f(a),f(b)$ are not adjacent. If $b = x_j$ for some $j \in [r] \setminus {j}$, then $a$ and $b$ are adjacent if and only if $a = v_{j,0}$, which holds if and only if $f(a) = v_{j,0}$, which in turn holds if and only if $f(a)$ is adjacent to $x_j = f(b)$. This covers all possible cases, and so we have that $a$ and $b$ are adjacent if and only if $f(a)$ and $f(b)$ are adjacent. This completes the proof that $f$ is an isomorphism, and so  $(M^{\even})_{x_i} \sim (M^{\odd})_{x_i}$.
\end{proof}

\section{Binary example}\label{sec:binary}
In this section, we show how to construct two binary networks on $X$ that are $X$-reconstructions of each other but are not equivalent, for $|X| \geq 4$.
This is enough to show that networks on $r \geq 4$  leaves are not leaf-reconstructible.

Given the non-binary networks $M^{\even}$ and $M^{\odd}$ constructed in the previous section, we proceed to construct two graphs $G^{\even}$ and $G^{\odd}$ in the following way. For each binary sequence $w \in {\cal B}_r$, $u_w$ will be expanded into a caterpillar $Cat(w)$ (details of the construction are given below). Each vertex $v_{i,h}$ will be expanded into a lexicographic tree $Lex({i,h})^\even$ or $Lex({i,h})^\odd$  (defined below).
For any $w \in {\cal B}_r, i \in [r], h \in \{0,1\}$, the subgraphs $Cat(w)$, $Lex({i,h})^\even$  and $Lex({i,h})^\odd$ contain leaves denoted $z_{w,i}$, for $w \in {\cal B}_r$ and $i \in [r]$. Two subgraphs $Cat(w)$ and $Lex({i,h})^\even$ (or $Cat(w)$ and $Lex({i,h})^\odd$ ) will share a vertex $z_{w,i}$ if and only if $\cha{w}{i}=h$ (analagous to how in $M^{\even}$ and $M^{\odd}$, the vertices $u_w$ and $v_{i,h}$ are adjacent if and only if $\cha{w}{i}=h$).

Similarly with $M^{\even}$ and $M^{\odd}$, we will  show that $G^{\even}$ and $G^{\odd}$ are not equivalent, but that they become equivalent if a single leaf $x_i$ is deleted.

We note that $G^{\even}$ and $G^{\odd}$ are not technically networks, because while they have maximum degree $3$, they contain some vertices of degree $2$ (in particular, every vertex  $z_{w,i}$ has degree $2$). In the last part of this section, we will produce two networks  $N^{\even}$ and $N^{\odd}$ from  $G^{\even}$ and $G^{\odd}$.

We now define the two types  of tree that will be used in our construction.

\medskip

\begin{definition}
For any sequence $w \in {\cal B}_r$, the \emph{caterpillar} $Cat(w)$ is the tree with internal vertices $u_w$ and $y_{w,i}$ for each $i \in [r-3]$, leaves $z_{w,i}$ for each $i \in [r]$, and edges
$u_wz_{w,1},u_wz_{w,2}, u_wy_{w,1}$, $y_{w,r-3}z_{w,r-1}, y_{w,r-3}z_{w,r}$,  and $y_{w,i}z_{w,i+2}, y_{w,i}y_{w,i+1}$ for each $1 \leq i \leq r-4$.
\end{definition}

See Figure~\ref{fig:caterpillar} for an example. Observe that all internal vertices of $Cat(w)$ have degree $3$.

\begin{figure}
\begin{center}
 \includegraphics[scale =1]{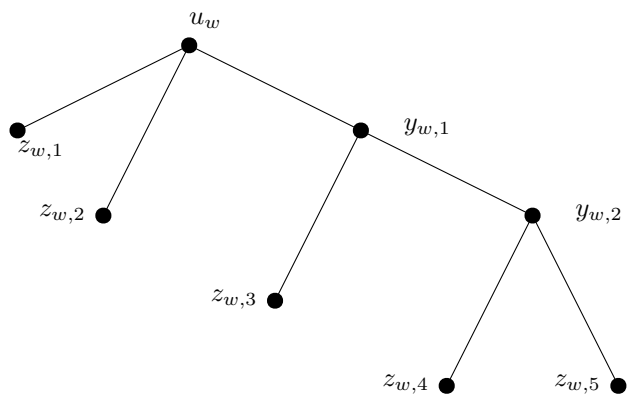}
\end{center}
 \caption{The caterpillar $Cat(w)$ for the case $r=5$.}
 \label{fig:caterpillar}
\end{figure}

\medskip

\begin{observation}\label{lem:caterpillarIsomorphism}
Given sequences $w,w' \in {\cal B}_r$, the trees $Cat(w)$ and $Cat({w'})$ are equivalent. In particular, there exists an isomorphism $f$ between $Cat(w)$ and $Cat({w'})$ such that $f(u_w)=u_{w'}$ and $f(z_{w,i}) = z_{w',i}$ for all $i \in [r]$.
\end{observation}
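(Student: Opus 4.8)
The plan is to exploit the fact that the definition of $Cat(w)$ is completely independent of the bits of $w$: the sequence $w$ appears only as a subscript used to name the vertices $u_w$, $y_{w,i}$ and $z_{w,i}$, while the adjacencies are specified by a fixed list of index patterns that makes no reference to the values $\cha{w}{i}$. Consequently $Cat(w)$ and $Cat(w')$ are the same graph up to this renaming, and the isomorphism I want is simply the one that matches the subscripts.

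Concretely, I would define $f : V(Cat(w)) \to V(Cat(w'))$ by setting $f(u_w) = u_{w'}$, $f(y_{w,i}) = y_{w',i}$ for each $i \in [r-3]$, and $f(z_{w,i}) = z_{w',i}$ for each $i \in [r]$. Since $Cat(w)$ and $Cat(w')$ have exactly these vertex sets and $f$ pairs them up name-for-name, $f$ is a bijection. The trees $Cat(w)$ carry no $X$-labels on their own (the vertices $z_{w,i}$ are merely named, not labelled by elements of $X$), so the label-preserving condition is vacuous, and it remains only to check edge preservation.

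For edge preservation I would run down the defining edge list and observe that each edge maps to the corresponding edge of the same form: the three edges $u_wz_{w,1}, u_wz_{w,2}, u_wy_{w,1}$ map to $u_{w'}z_{w',1}, u_{w'}z_{w',2}, u_{w'}y_{w',1}$; the two terminal edges $y_{w,r-3}z_{w,r-1}$ and $y_{w,r-3}z_{w,r}$ map to $y_{w',r-3}z_{w',r-1}$ and $y_{w',r-3}z_{w',r}$; and for each $1 \leq i \leq r-4$ the edges $y_{w,i}z_{w,i+2}$ and $y_{w,i}y_{w,i+1}$ map to $y_{w',i}z_{w',i+2}$ and $y_{w',i}y_{w',i+1}$. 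As $f$ carries the full edge set of $Cat(w)$ bijectively onto the full edge set of $Cat(w')$ (the two lists being identical after replacing $w$ by $w'$), $f$ is edge-preserving, hence an isomorphism; and by construction $f(u_w) = u_{w'}$ and $f(z_{w,i}) = z_{w',i}$, as required.

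There is essentially no genuine obstacle here: the content of the statement is exactly the remark that the caterpillar's shape depends on $r$ alone. The only thing to be careful about is the bookkeeping at the two ends of the path (the three edges at $u_w$ and the two edges at $y_{w,r-3}$) and the degenerate small case (for $r = 4$ the range $1 \leq i \leq r-4$ is empty and $y_{w,1} = y_{w,r-3}$), but in every case the edge list for $Cat(w')$ is obtained from that of $Cat(w)$ by the substitution $w \mapsto w'$, so edge preservation holds verbatim.
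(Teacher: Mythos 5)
Your proof is correct and is exactly the argument the paper has in mind: the observation is stated without proof precisely because the edge list defining $Cat(w)$ depends only on $r$, so the subscript-renaming map is trivially an isomorphism satisfying the required conditions. Your attention to the endpoints and the $r=4$ degenerate case is careful but does not change the substance.
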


\medskip

\begin{definition}
Given a set $S$ of binary sequences such that $|S| = 2^t$ for some positive integer $t$, and and $i \in [r]$, the lexicographic tree $Lex(S,i)$ is a fully balanced binary tree with leaves $z_{w,i}$ for  $w \in S$. All non-leaf vertices have degree $3$ except for a single vertex, called the \emph{root}, of degree $2$, and all leaves are of distance exactly $t$ from the root.
Moreover, the leaves are arranged in such a way that there exists a depth-first search of the vertices of $Lex(S,i)$ that traverses the leaves $z_{w,i}$ in lexicographic order with respect to $w$. (Note that this uniquely determines $Lex(S,i)$.)
\end{definition}

\medskip

\begin{definition}
Let $(S^{\even})_{i:h}$ be the set of all length-$r$ binary sequences $w$ of even weight such that $\cha{w}{i} = h$. Let $(S^{\odd})_{i:h}$ be the set of all length-$r$ binary sequences $w$ of odd weight such that $\cha{w}{i} = h$.
\end{definition}

\medskip

\begin{definition}
For any $i \in [r]$ and $h \in \{0,1\}$,
define $Lex(i,h)^\even = Lex((S^{\even})_{i:h},i)$, and define $Lex(i,h)^\odd = Lex((S^{\odd})_{i:h},i)$. (Thus the leaves of $Lex(i,h)^\even$  are  $z_{w,i}$ for  $w \in S^\even_{i:h}$, and the leaves of $Lex(i,h)^\odd$  are  $z_{w,i}$ for  $w \in S^\odd_{i:h}$). We refer to the root of $Lex(i,h)^\even$ by $v_{i,h}^\even$, and we refer to the root of $Lex(i:h)^\odd$ by $v_{i,h}^\odd$.
\end{definition}

\medskip

(See Figure~\ref{fig:smallRootedLexicographicTrees} for some examples.)

\medskip

\begin{figure}\centering
\begin{subfigure}{0.3\textwidth}
\begin{center}
 \includegraphics[scale = 1]{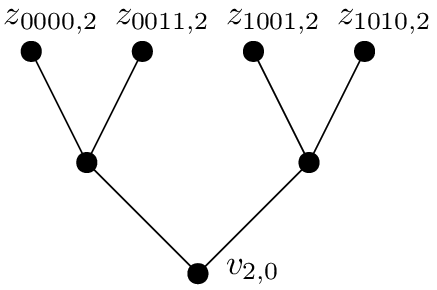}
\end{center}
 \caption{ $Lex({2,0})^{\even}$}    
\end{subfigure}
\begin{subfigure}{0.3\textwidth}
\begin{center}
 \includegraphics[scale = 1]{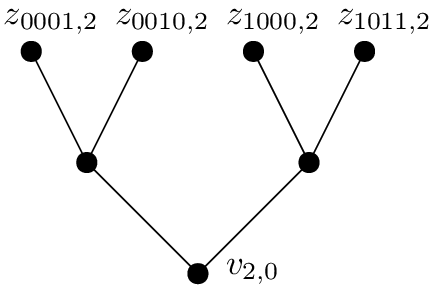}
\end{center}
 \caption{$Lex({2,0})^{\odd}$}    
\end{subfigure}
 \caption{
The lexicographic trees $Lex({2,0})^{\even}$ and $Lex({2,0})^{\odd}$ for the case $r = 4$. Leaves of $Lex({2,0})^{\even}$ (respectively, $Lex({2,0})^{\odd}$) are $z_{w,2}$ for every length-$r$ sequence $w$ of even weight (odd weight) such that $\cha{w}{2} = 0$.
 }\label{fig:smallRootedLexicographicTrees}
\end{figure}

\begin{lemma}\label{lem:specificLexTreeIsomorphism}
For any $j \in [r] \setminus \{i\}$ and $h \in \{0,1\}$, there exists an isomorphism $f$ between $Lex(j,h)^\even$ and $Lex(j,h)^\odd$ such that $f(v_{j,h}^\even) = v_{j,h}^\odd$, and $f(z_{w,j}) = z_{\inv{w}{i},j}$ for all $w \in (S^{\even})_{j:h}$.

Also, for any $h \in \{0,1\}$ there exists an isomorphism $f$ between
$Lex(i,h)^\even$ and $Lex(i,1-h)^\odd$ such that $f(v_{i,h}^\even) = v_{i,1-h}^\odd$, and $f(z_{w,i}) = z_{\inv{w}{i},i}$ for all $w \in (S^{\even})_{i:h}$.
\end{lemma}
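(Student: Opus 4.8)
The plan is to reduce both claims to a single combinatorial fact about how the bit-flip operation $w \mapsto \inv{w}{i}$ acts on the lexicographic order of the relevant sequence sets, and then to check that this action is realised by an automorphism of a balanced binary tree. I would begin with two easy reductions. Since $|(S^{\even})_{j:h}| = |(S^{\odd})_{j:h}| = 2^{r-2}$ (and likewise $|(S^{\even})_{i:h}| = |(S^{\odd})_{i:1-h}| = 2^{r-2}$), the two trees in each claim are fully balanced binary trees on $2^{r-2}$ leaves, so they are abstractly isomorphic. The root is the unique degree-$2$ vertex of such a tree, so any graph isomorphism automatically maps root to root; hence the conditions $f(v^{\even}) = v^{\odd}$ come for free once an isomorphism is produced. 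It therefore suffices to show that the prescribed leaf bijection $z_{w,\cdot} \mapsto z_{\inv{w}{i},\cdot}$ extends to a tree isomorphism, i.e.\ that it respects the hierarchical least-common-ancestor structure of the two balanced trees. As a leaf's position in that structure is exactly its rank in the lexicographic order defining $Lex(\cdot)$, everything reduces to understanding $w \mapsto \inv{w}{i}$ as a map of ranks.

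The key structural observation I would establish is that the lexicographic order on $(S^{\even})_{j:h}$ (and on $(S^{\odd})_{j:h}$) is governed by its ``free'' coordinates. Let $c$ denote the constrained index ($c=j$ in the first claim, $c=i$ in the second), and let $d$ be the largest index different from $c$. Given the bits at all positions other than $c$ and $d$, the bit at $c$ is fixed by the claim and the bit at $d$ is then forced by the parity constraint. Thus each sequence is determined by its $r-2$ free bits (the positions other than $c$ and $d$), these range over all of $\{0,1\}^{r-2}$, and the map sending a sequence to its free-bit string is an order-isomorphism onto $\{0,1\}^{r-2}$ with lexicographic order. Consequently the rank of $z_{w,\cdot}$ equals the integer whose binary expansion is $w$'s free-bit string. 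I would also carry out the parity bookkeeping showing that the forced bit at $d$ for the even set and for the relevant odd set agree exactly where required, so that $\inv{w}{i}$ genuinely lands in the target set with the expected free bits.

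With ranks identified with free-bit strings, the final step is a short case analysis of the permutation $\sigma$ of $\{0,1\}^{r-2}$ induced by $w \mapsto \inv{w}{i}$, together with the remark that flipping a single fixed coordinate of the rank string (and of course the identity) is an automorphism of the balanced binary tree, since it amounts to swapping the two subtrees below every node at one fixed depth. In the first claim, if $i = d$ then $\inv{w}{i}$ flips only the forced bit and leaves every free bit unchanged, so $\sigma$ is the identity; if $i \neq d$ then $i$ is a free position and $\inv{w}{i}$ flips exactly one coordinate of the rank string, so $\sigma$ is a coordinate flip. In the second claim $i = c$, so $\inv{w}{i}$ alters only the constrained bit and again fixes every free bit, giving $\sigma = \mathrm{id}$. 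In each case $\sigma$ is a tree automorphism, so the leaf bijection extends to the desired isomorphism, completing both claims.

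I expect the main obstacle to be the middle step: pinning down precisely that the lexicographic tree structure is captured by the free bits, and doing the parity accounting so that $w \mapsto \inv{w}{i}$ both lands in the correct sequence set and induces a rank map of the simple identity-or-single-coordinate-flip form. Once that correspondence is set up correctly, the automorphism remark and the case analysis are routine, and the root-preservation and leaf-mapping conditions follow immediately.
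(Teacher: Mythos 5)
Your proof is correct and rests on the same core fact as the paper's: the ancestor (least-common-ancestor) structure of a lexicographic tree is governed by agreement on initial segments of the unconstrained coordinates, and this structure is preserved by the simultaneous flip of coordinate $i$. The paper packages this slightly more directly --- it characterises shared depth-$l$ ancestors by agreement on the first $r-2-l$ positions other than the constrained index, observes that flipping position $i$ in both sequences preserves such agreement, and then maps depth-$l$ ancestors to depth-$l$ ancestors, thereby avoiding your explicit rank/free-bit identification and the case split on whether $i$ is the parity-forced position --- but this is a presentational difference rather than a genuinely different argument.
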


\begin{proof}
Observe that the root of a lexicographic tree is unique, as it is the only vertex of degree $2$. Then for any integer $l$ and leaf $z_{w,j}$ in a lexicographic tree, we may define the \emph{depth-$l$ ancestor} of $z_{w,j}$ as follows. The depth-$l$ ancestor of $z_{w,j}$ is the unique vertex on a path between $z_{w,j}$ and the root, that has distance $l$ from $z_{w,ji}$ . Note that we count the root itself as a depth-$(r-2)$ ancestor of every leaf, and each leaf is the depth $0$ ancestor of itself. Moreover, because a lexicographic tree is fully balanced, if a vertex $a$ is the depth-$l$ ancestor of one leaf and the depth-$l'$ ancestor of  another leaf then $l = l'$.

In order to prove the first claim, we first show that for any two sequences $w,w' \in (S^{\even})_{j:h}$ and integer $l$, the leaves $z_{w,j}, z_{w',j}$ share a depth-$l$ ancestor in $Lex(j,h)^\even$  if and only if $z_{\inv{w}{i},j}$, $z_{\inv{w'}{i},j}$ share a depth-$l$ ancestor in $Lex(j,h)^\odd$. Indeed, it is easy to see that $z_{w,j}, z_{w',j}$  share a depth-$l$ ancestor if and only if $w,w'$ agree on the first $r-2-l$ elements not including $j$. But if $w,w'$ agree on these elements then so do $\inv{w}{i}, \inv{w'}{i}$, and so $z_{\inv{w}{i},j}$, $z_{\inv{w'}{i},j}$ also share a depth-$l$ ancestor.

Thus, we may define a bijective function $f:V(Lex(j,h)^\even) \rightarrow V(Lex(j,h)^\odd)$ as follows. For any vertex $a \in V(Lex(j,h)^\even)$ with distance $r-2-l$ from the root, choose any sequence $w \in (S^{\even})_{j:h}$ such that $a$ is a depth-$l$ ancestor of $z_{w,j}$, and let $f(a)$ be the depth-$l$ ancestor of $z_{\inv{w}{i},j}$ in $Lex(j,h)^\odd$. Observe that $f$ is well-defined, since we have just shown that if two leaves $z_{w,j}, z_{w',j}$ share $a$ as a depth-$l$ ancestor, then $z_{\inv{w}{i},j}$, $z_{\inv{w'}{i},j}$ also have the same depth-$l$ ancestor.

By construction, it is clear that  $f(v_{j,h}^\even) = v_{j,h}^\odd$, and $f(z_{w,j}) = z_{\inv{w}{i},j}$ for all $w \in (S^{\even})_{j:h}$. To see that $f$ is an isomorphism it remains to show that $f$ is edge-preserving. To see this, observe that two vertices $a,b \in V(Lex(j,h)^\even)$ are adjacent if and only if one is the depth-$l$ ancestor and the other the depth$(l+1)$ ancestor of some leaf, and that this holds if and only if $f(a),f(b)$ are also adjacent.

The proof of the second claim is similar.
\end{proof}

We can now describe the structure of $G^{\even}$ and $G^{\odd}$.

For each $w \in S^{\even}$, let $G^{\even}$ contain the caterpillar $Cat(w)$.
For each $i \in [r]$ and $h \in \{0,1\}$, let $G^{\even}$ contain the lexicographic tree $Lex({i,h})^{\even}$. Finally, for each $i \in [r]$ let $G^{\even}$ contain the labelled leaf $x_i$ adjacent to $v_{i,0}$.

The construction of $G^{\odd}$ is similar: For each $w \in S^{\odd}$, let $G^{\odd}$ contain the caterpillar $Cat(w)$. For each $i \in [r]$ and $h \in \{0,1\}$, let $G^{\odd}$ contain the lexicographic tree $Lex({i,h})^{\odd}$.
Finally, for each $i \in [r]$ let $G^{\odd}$ contain the labelled leaf $x_i$ adjacent to $v_{i,0}$.

Observe that in both $G^{\even}$ and $G^{\odd}$, the vertices $z_{w,i}$ have degree $2$ (as they appear as a leaf in the caterpillar $Cat(w)$  and in the lexicographic tree $Lex(i,\cha{w}{i})^\even$ or $Lex(i,\cha{w}{i})^\odd$ ).
The vertices $v_{i,1}$ also have degree $2$, and all other non-leaf vertices have degree $3$.

We will later show that $G^{\even}$ and $G^{\odd}$ are not equivalent. First though, we will show that the multigraphs derived from $G^{\even}$ and $G^{\odd}$ by \emph{deleting} (not removing) the same leaf are in fact equivalent.

\medskip

\begin{lemma}\label{lem:GiEquivalence}
For $i \in [r]$, let  $G^{\even} - x_i$  be the graph derived from  $G^{\even}$ by deleting $x_i$ and its incident edge, and similarly let $G^{\odd} - x_i$  be the graph derived from  $G^{\odd}$ by deleting $x_i$ and its incident edge. Then $G^{\even} - x_i$  and $G^{\odd} - x_i$ are equivalent.
\end{lemma}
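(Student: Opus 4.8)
The plan is to construct an explicit isomorphism $f$ between $G^{\even} - x_i$ and $G^{\odd} - x_i$, building it up piece-by-piece from the component-wise isomorphisms already established, and then to check that the gluing is consistent across the shared degree-2 vertices $z_{w,j}$. The key observation driving the whole construction is the weight-flip bijection $w \mapsto \inv{w}{i}$ between $S^{\even}$ and $S^{\odd}$: since flipping coordinate $i$ changes the parity of the weight, this is a bijection, and all of Lemma~\ref{lem:specificLexTreeIsomorphism} and Observation~\ref{lem:caterpillarIsomorphism} are phrased precisely so that their isomorphisms respect the leaf-relabelling $z_{w,\cdot} \mapsto z_{\inv{w}{i},\cdot}$.

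First I would define $f$ on each structural piece. On each caterpillar: for $w \in S^{\even}$, map $Cat(w)$ to $Cat(\inv{w}{i})$ using the isomorphism from Observation~\ref{lem:caterpillarIsomorphism}, which sends $u_w \mapsto u_{\inv{w}{i}}$ and $z_{w,j} \mapsto z_{\inv{w}{i},j}$ for all $j \in [r]$. On the lexicographic trees for the ``other'' coordinates $j \neq i$: map $Lex(j,h)^\even$ to $Lex(j,h)^\odd$ via the first isomorphism of Lemma~\ref{lem:specificLexTreeIsomorphism}, which sends $z_{w,j} \mapsto z_{\inv{w}{i},j}$. For coordinate $i$ itself: map $Lex(i,h)^\even$ to $Lex(i,1-h)^\odd$ via the second isomorphism of Lemma~\ref{lem:specificLexTreeIsomorphism}, which sends $z_{w,i} \mapsto z_{\inv{w}{i},i}$. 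Finally, since $x_i$ has been deleted, its neighbour $v_{i,0}$ is now the degree-1-or-2 root of $Lex(i,0)^\even$; the second Lemma sends $v_{i,0}^\even = v_{i,0} \mapsto v_{i,1}^\odd = v_{i,1}$, and no leaf $x_i$ needs to be tracked. On all remaining labelled leaves $x_j$ ($j \neq i$), $f$ acts as the identity.

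The main obstacle — and the only genuinely non-routine step — is to verify that these independently-defined piecewise maps agree on the shared vertices $z_{w,j}$, so that $f$ is a well-defined bijection. A vertex $z_{w,j}$ lies in exactly two pieces: the caterpillar $Cat(w)$ and the lexicographic tree $Lex(j,\cha{w}{j})^{\bullet}$. I must check that both pieces send it to the same target. The caterpillar map sends $z_{w,j} \mapsto z_{\inv{w}{i},j}$. For $j \neq i$, the lexicographic map also sends $z_{w,j} \mapsto z_{\inv{w}{i},j}$ and, crucially, $\cha{\inv{w}{i}}{j} = \cha{w}{j}$, so the target really does lie in the correspondingly-indexed tree $Lex(j,\cha{w}{j})^\odd$ of $G^{\odd} - x_i$ — the pieces match. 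For $j = i$, the lexicographic map sends $z_{w,i} \mapsto z_{\inv{w}{i},i}$, agreeing with the caterpillar; here $\cha{\inv{w}{i}}{i} = 1 - \cha{w}{i}$, so the image leaf sits in $Lex(i,1-\cha{w}{i})^\odd$, which is exactly the tree paired with coordinate $i$ under the shift $h \mapsto 1-h$, so again consistency holds. Thus the two descriptions coincide on every shared vertex, and $f$ glues to a single well-defined bijection on all of $V(G^{\even}-x_i)$.

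It then remains to confirm that $f$ is label- and edge-preserving. Label-preservation is immediate since $f$ fixes every remaining labelled leaf $x_j$. For edge-preservation, every edge of $G^{\even} - x_i$ lies entirely within one of the pieces (caterpillars and lexicographic trees share only the degree-2 vertices $z_{w,j}$, which carry no edges between distinct pieces beyond those internal to each), so edge-preservation follows directly from the fact that each piecewise map is itself an isomorphism onto the corresponding piece of $G^{\odd} - x_i$, together with the already-checked fact that the pieces of $G^{\odd} - x_i$ are indexed so as to match the images. Since the pieces of $G^{\odd}-x_i$ are $Cat(\inv{w}{i})$ for $w \in S^{\even}$ (equivalently $Cat(w')$ for $w' \in S^{\odd}$), the trees $Lex(j,h)^\odd$ for $j \neq i$, and the trees $Lex(i,1-h)^\odd$, these exhaust all pieces of $G^{\odd} - x_i$, so $f$ is onto. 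This establishes that $f$ is an isomorphism and hence $G^{\even} - x_i \sim G^{\odd} - x_i$.
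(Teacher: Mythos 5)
Your proposal is correct and follows essentially the same route as the paper's own proof: glue the caterpillar isomorphisms of Observation~\ref{lem:caterpillarIsomorphism} with the two kinds of lexicographic-tree isomorphisms from Lemma~\ref{lem:specificLexTreeIsomorphism}, check agreement on the shared vertices $z_{w,j}$, and extend by the identity on the remaining leaves $x_j$. Your consistency check on the shared $z_{w,j}$ (including the index bookkeeping $\cha{\inv{w}{i}}{j}=\cha{w}{j}$ for $j\neq i$ and the $h\mapsto 1-h$ shift for $j=i$) is in fact spelled out more carefully than in the paper; the only negligible omission is that the pendant edges $x_jv_{j,0}$ ($j\neq i$) lie in no piece, but they are trivially preserved since $f$ fixes both endpoints.
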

\begin{proof}

We will describe a set of isomorphisms between subgraphs of $G^{\even} - x_i$  and $G^{\odd} - x_i$, then combine them to produce an isomorphism between $G^{\even} - x_i$  and $G^{\odd} - x_i$. Each isomorphism will be one that maps vertex $z_{w,j}$ to $z_{\inv{w}{i},j}$.

For each $w \in S^{\even}$, Lemma~\ref{lem:caterpillarIsomorphism} implies that there exists an isomorphism $f$ between $Cat(w)$ and $Cat({\inv{w}{i}})$ such that $f(z_{w,j}) = z_{\inv{w}{i},j}$ for each $j \in [r]$.
For each $j \in [r]\setminus \{i\}$ and $h \in \{0,1\}$, Lemma~\ref{lem:specificLexTreeIsomorphism} implies that there exists an isomorphism $f$ between $Lex({j,h})^{\even}$ and $Lex({j,h})^{\odd}$, such that $f(v_{j,h}) = v_{j,h}$ and $f(z_{w,j},j) = z_{\inv{w}{i};j}$ for each leaf $z_{w,j}$. Finally, for each  $h \in \{0,1\}$, Lemma~\ref{lem:specificLexTreeIsomorphism} implies that there exists an isomorphism $f$ between $Lex({i,h})^{\even}$ and $Lex({i,1-h})^{\odd}$, such that $f(v_{i,h}^\even) = v_{i,1-h}^\odd$ and $f(z_{w,i}) = z_{\inv{w}{i},i}$ for each leaf $z_{w,i}$.

Observe that all of these isomorphisms agree on  $z_{w,j}$ for any $w \in S^\even, j \in [r]$ (that is, they each map this vertex to $z_{\inv{w}{i},j}$), and such vertices are the only vertices that are shared between caterpillars and lexicographic trees. Thus we can combine these isomorphisms into a single edge-preserving function $f$ that maps every non-leaf vertex of  $G^{\even} - x_i$ to a non-leaf vertex of  $G^{\odd} - x_i$.  Moreover, as each caterpillar and lexicographic tree in $G^{\even} - x_i$ is mapped to a different caterpillar or lexicographic tree in $G^{\odd} - x_i$, this function is a bijection.  Finally, set $f(x_j)=x_j$ for every $j \in [r]\setminus{i}$.  Then $f$ is now a bijective function from $V(G^{\even} - x_i)$ to $V(G^{\odd} - x_i)$ that is both edge-preserving and label-preserving.
\end{proof}

We note that we cannot extend the above graph isormorphism between $G^{\even} - x_i$  and $G^{\odd} - x_i$ to an isomorphism between $G^{\even}$  and $G^{\odd}$ by setting $f(x_i) = x_i$, because $f(v_{i,0}^\even) = v_{i,1}^\odd$, and so there would be no edge between $x_i=f(x_i)$ and $f(v_{i,0}^\even) = v_{i,1}^\odd$ in $G^{\odd}$ .

In fact, the next lemma shows that there is no isomorphism between $G^{\even}$  and $G^{\odd}$.

\medskip

\begin{lemma}\label{lem:GNonEquivalence}
Let ${\bf 0}$ denote the all-$0$ sequence from ${\cal B}_r$. For two vertices $a,b$ in $G^{\even}$, let $dist^\even(a,b)$ denote the distance between $a$ and $b$ in $G^\even$.  Similarly for two vertices $a,b$ in $G^{\odd}$, let $dist^\odd(a,b)$ denote the distance between $a$ and $b$ in $G^\odd$. Then for any vertex $a$ in $G^{\odd}$:
  \begin{enumerate}
\item If $d^\odd(a,x_1) = d^\even({\bf 0}, x_1)$ then $a = u_w$ for some $w \in S^{\odd}$.
\item If $a = u_w$ for some $w \in S^{\odd}$ then there exists $i \in [r]$ such that $dist^\odd(a,x_i) > dist^\even({\bf 0}, x_i)$.
  \end{enumerate}

This holds even if we suppress all degree-$2$ vertices in  $G^{\even}$  and $G^{\odd}$.
\end{lemma}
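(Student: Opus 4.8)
The plan is to reduce both parts of the lemma to a single distance estimate controlling how far a caterpillar head $u_w$ can be from a ``foreign'' leaf $z_{w',i}$ with $w' \neq w$. Throughout, write $c_i$ for the distance inside a caterpillar from its head to the leaf of index $i$; this is independent of $w$ by Observation~\ref{lem:caterpillarIsomorphism}, and inspecting the edge set gives $c_1=c_2=1$, $c_i=i-1$ for $3\le i\le r-1$, and $c_r=r-2$. Since $x_i$ is a leaf whose only neighbour is $v_{i,0}$, the root of the depth-$(r-2)$ tree $Lex(i,0)$, the natural route from $u_{\bf 0}$ to $x_i$ climbs $Cat({\bf 0})$ to $z_{{\bf 0},i}$, climbs $Lex(i,0)^\even$ to $v_{i,0}$, and then steps to $x_i$. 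I would first confirm this route is shortest, obtaining $dist^\even({\bf 0},x_i)=c_i+(r-1)$ and in particular $dist^\even({\bf 0},x_1)=r$.

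The engine is the estimate: for any head $u_w$ and any $w'\neq w$, $dist(u_w,z_{w',i})\ge c_i+2$. To prove it I would take a shortest path $P$ and cut it at the shared leaves $z_{\cdot,\cdot}$, writing $P$ as an alternating concatenation of \emph{caterpillar segments} (portions inside a single $Cat(\cdot)$, between two of its leaves or from $u_w$ to a leaf) and \emph{lexicographic segments} (portions inside a single $Lex(\cdot,\cdot)$ between two of its leaves). Because consecutive segments meet at a shared leaf $z_{\cdot,j}$, whose index $j$ is forced by the lexicographic tree joining them, the caterpillar segments spell out a walk $u\to z_{a_1}\to z_{a_2}\to\dots\to z_i$ inside one generic caterpillar; hence by the triangle inequality their total length is at least $dist_{Cat}(u,z_i)=c_i$. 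As $w'\neq w$, the path must change caterpillars, so it contains at least one lexicographic segment, and each such segment joins two \emph{distinct} leaves and thus has length at least $2$. This gives the claimed bound. Applied with $w={\bf 0}$, the same decomposition shows the nearest leaf of $Lex(i,0)$ to $u_{\bf 0}$ is the native leaf $z_{{\bf 0},i}$, which is exactly what validates the formula for $dist^\even({\bf 0},x_i)$.

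With this in hand the two parts are short. For Part~1 I would work in the ball around $x_1$: every path from $x_1$ starts $x_1,v_{1,0},\dots$, so $dist^\odd(a,x_1)=1+dist^\odd(a,v_{1,0})$; the vertices within distance $r-2$ of $v_{1,0}$ are precisely those of $Lex(1,0)^\odd$ (all at depth at most $r-2$), and the only way to reach distance $r-1$ from $v_{1,0}$ is to step from a leaf $z_{w,1}$ (depth $r-2$) to its unique caterpillar neighbour $u_w$, with $w\in (S^\odd)_{1:0}$. Thus the vertices at distance $r=dist^\even({\bf 0},x_1)$ from $x_1$ are exactly these heads $u_w$, proving Part~1. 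For Part~2, given $a=u_w$ with $w\in S^\odd$, pick any $i$ with $\cha{w}{i}=1$ (one exists, as $w$ has positive weight). Then $z_{w,i}\in Lex(i,1)^\odd$, whose root $v_{i,1}$ is \emph{not} adjacent to $x_i$, so every path from $u_w$ to $x_i$ must enter $Lex(i,0)^\odd$ through a foreign leaf $z_{w',i}$ and then climb to $v_{i,0}$ and on to $x_i$; by the estimate this costs at least $(c_i+2)+(r-1)>c_i+r-1=dist^\even({\bf 0},x_i)$, which is Part~2.

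Finally, for the suppression clause I would re-run the computations with distance replaced by ``number of surviving (non-degree-$2$) vertices on a path, minus one,'' which is exactly the distance in the suppressed graph, since the only degree-$2$ vertices are the $z_{w,i}$ and the roots $v_{i,1}$. Tracking these through the decomposition turns the native distance into $c_i+r-2$, while in the foreign-leaf estimate each caterpillar segment loses its two suppressed leaf endpoints (so a segment of length $\ell$ now contributes $\ell-1$ surviving vertices), and each lexicographic segment retains at least one surviving internal vertex; thus the ``$-1$ per caterpillar hop'' is paid for by the ``$+1$ per lexicographic hop,'' and a strict surplus of $1$ remains, yielding $c_i+r-1>c_i+r-2$ again, and the $u_w$ of Part~1 now sit at distance $r-1$ from $x_1$. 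I expect the main obstacle to be precisely this last bookkeeping: making the segment decomposition and the triangle-inequality telescoping fully rigorous---in particular handling shortest paths that reach $z_{w',i}$ from the lexicographic side, and checking that no surviving vertex is double counted---so that the strict inequality is not eroded once the degree-$2$ vertices are suppressed.
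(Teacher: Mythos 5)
Your treatment of the unsuppressed graphs is correct and is essentially the paper's own argument in different clothing: your segment decomposition with triangle-inequality telescoping in a generic caterpillar is the same mechanism as the paper's projection $g$ of a path onto $Cat({\bf 0})$ (lexicographic segments collapse to a single leaf, caterpillar segments telescope), and your ball-growing argument for Part~1 matches the paper's. The gap is in the suppression clause, and it sits exactly where you predicted. Count surviving vertices on a path $C_1L_1\dots C_mL_m$ followed by the edge $v_{i,0}x_i$: the caterpillar segments contribute $u_w$ plus $\sum_j(\ell(C_j)-1)\ge c_i-m$ internal vertices, each of the $m-1$ intermediate lexicographic segments contributes at least $1$, and the final climb inside $Lex(i,0)^\odd$ contributes $r-2$ (its $r-3$ internal vertices plus $v_{i,0}$), plus $x_i$. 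This totals at least $c_i+r-1$ surviving vertices, i.e.\ suppressed length at least $c_i+r-2$ --- which \emph{equals} the suppressed native distance $dist^\even({\bf 0},x_i)=c_i+r-2$ rather than exceeding it. There are $m$ ``$-1$ per caterpillar hop'' terms but only $m-1$ intermediate lexicographic hops to pay for them, so no surplus of $1$ remains: the $+2$ per lexicographic segment that gave you strictness in the unsuppressed estimate is exactly eaten by suppressing that segment's two endpoint $z$-vertices.

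The missing ingredient is a strengthening of the caterpillar bound. Since $\cha{w}{i}=1$, the path must enter $Lex(i,0)^\odd$ through a leaf $z_{w',i}$ with $w'\neq w$, so $m\ge 2$; the projected walk $u\to z_{a_1}\to\dots\to z_{a_m}=z_i$ has consecutive indices distinct (each $C_{j}$ joins two \emph{distinct} leaves of one caterpillar), hence visits some leaf $z_{a_{m-1}}$ with $a_{m-1}\neq i$. Because every leaf of the caterpillar is pendant, this detour costs at least two extra edges, giving $\sum_j\ell(C_j)\ge c_i+2$ rather than $c_i$, which restores a strict surplus of $2$ after suppression. (One must also note that if a suppressed root $v_{j,1}$ lies on an intermediate lexicographic segment, that segment has length $2(r-2)\ge 4$ and still contributes at least $2$ surviving vertices, so it does not erode the bound.) In fairness, the paper itself dispatches the suppressed case with ``a similar argument applies,'' but your writeup commits to a specific accounting that, as stated, yields only equality, so the clause ``this holds even if we suppress all degree-$2$ vertices'' is not established without the extra $+2$.
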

\begin{proof}
We consider the two parts of the claim separately.

\begin{enumerate}
\item  We first calculate the value of $d^\even({\bf 0}, x_1)$.
   Recall that in $G^{\even}$, $x_1$ is adjacent to the root $v_{1,0}$ of  $Lex({1,0})^{\even}$, and (by definition) every leaf of $Lex({1,0})^{\even}$ has distance $r-2$ from $v_{1,0}$.
   As $u_{\bf 0}$ is adjacent to a leaf $z_{{\bf 0},1}$ of  $Lex({1,0})^{\even}$, it follows that $d^\even({\bf 0}, x_1) = 1 + r-2 + 1 = r$ (there is no shorter path from $u_{\bf 0}$ to $x_1$, as any path must pass through $z_{w,1}$ for some $w$).

   As all leaves in $Lex({1,0})^{\odd}$ have distance $r-2$ from $v_{1,0}$ in $G^{\odd}$, and therefore distance $r-1 = d^\even({\bf 0}, x_1)-1$ from $x_1$, it follows that the only vertices in $G^{\odd}$ of distance $d^\even({\bf 0}, x_1)$ from $x_1$ are those which are not in $Lex({1,0})^{\odd}$ but adjacent to a leaf $z_{w,1}$ of $Lex({1,0})^{\odd}$. By construction, all such vertices are  $u_w$ for some $w \in S^{\odd}$ such that $\cha{w}{1}=0$.

   When degree-$2$ nodes are suppressed, a similar argument holds, except that $d^\even({\bf 0}, x_1)$ is reduced by $1$ (as we suppress $z_{{\bf 0},1}$). It remains the case that the vertices in $G^{\odd}$ of distance $d^\even({\bf 0}, x_1)$ from $x_1$ are those which are incident to a vertex from $Lex({1,0})^{\odd}$ but not in $Lex({1,0})^{\odd}$ themselves, and again  all such vertices are $u_w$ for some $w \in S^{\odd}$ .

\item For any $w \in S^{\odd}$, there exists $i \in [r]$ such that$\cha{w}{i}=1$. Any path from $u_w$ to $x_i$ must pass through a vertex $z_{w',i}$ where $\cha{w'}{i}=0$, and all such vertices have equal distance from $x_i$. Thus, it is enough to show that the distance in $G^{\odd}$ between $u_w$ and any such $z_{w',i}$ is greater than the distance between $u_{\bf 0}$ and $z_{{\bf 0},i}$ in $G^{\even}$.

    To see this, consider a path $P$ between $u_w$ and $z_{w',i}$.  As $\cha{w'}{i}=0$, we note that $w' \neq w$ and so $P$ must traverse at least one lexicographic tree. We construct a mapping $g:V(P) \rightarrow V(Cat({\bf 0}))$, as follows. For any $a \in V(P)$, if $a$ is in $Cat(w'')$ for any $w'' \in S^{\odd}$ (including $w$ or $w'$), set $g(a) = f(a)$, where $f$ is the isomorphism between $Cat({w''})$ and $Cat({\bf 0})$ such that $f(u_w) = u_{\bf 0}$ and $f(z_{w'',j}) = z_{{\bf 0},j}$ for all $j \in [r]$ (such an isomorphism exists by Observation~\ref{lem:caterpillarIsomorphism}). Otherwise, it must be the case that $a \in Lex(j,h)^{\odd}$ for some $j \in [r], h \in \{0,1\}$. In this case, set $g(a) = z_{{\bf 0},j}$.
    Let $Q$ be the set of all $g(a)$ for any node $a$ in $P$. Observe that for any nodes $a,b$ in $P$, if $a$ and $b$ are adjacent then either $g(a)=g(b)$ or $g(a)$ and $g(b)$ are adjacent.  It follows that $Q$ forms a connected set of nodes in $Cat({\bf 0})$, and thus $Q$ contains a path between $g(u_w) = u_{\bf 0}$ and $g(z_{w',i}) = z_{{\bf 0},i}$.
    Moreover, as $P$ must traverse at least one lexicographic tree, there are consecutive nodes in $P$ that are mapped to the same node by $g$. It follows that the path in $Q$ is shorter than the path $P$, as required.
    It follows that the distance between $u_w$ and $x_i$ is greater than $d^\even({\bf 0}, x_i)$. We note that a similar argument applies even when vertices of degree $2$ are suppressed.
  \end{enumerate}
 \end{proof}

\medskip

\begin{corollary}
$G^{\even}$  and $G^{\odd}$ are not equivalent.
\end{corollary}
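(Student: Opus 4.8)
The plan is to argue by contradiction and invoke Lemma~\ref{lem:GNonEquivalence} directly; essentially all of the substantive work has already been carried out there, so the remaining task is only to assemble the two parts of that lemma into a contradiction. Suppose that $G^{\even}$ and $G^{\odd}$ are equivalent, and let $f : V(G^{\even}) \to V(G^{\odd})$ be an isomorphism. The two properties of $f$ that I would use are that it is label-preserving, so that $f(x_i) = x_i$ for every $i \in [r]$, and that any isomorphism of (multi)graphs preserves the distance between pairs of vertices.

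Next I would locate the image of $u_{\bf 0}$. Since ${\bf 0}$ has weight $0$, which is even, $u_{\bf 0}$ is a vertex of $G^{\even}$, and I would set $a := f(u_{\bf 0})$. Because $f$ preserves both labels and distances, for every $i \in [r]$ we have $dist^\odd(a, x_i) = dist^\odd(f(u_{\bf 0}), f(x_i)) = dist^\even(u_{\bf 0}, x_i)$; in the notation of Lemma~\ref{lem:GNonEquivalence} this reads $dist^\odd(a, x_i) = dist^\even({\bf 0}, x_i)$ for all $i$.

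Finally I would derive the contradiction. Taking $i = 1$ above gives $dist^\odd(a, x_1) = dist^\even({\bf 0}, x_1)$, so by the first part of Lemma~\ref{lem:GNonEquivalence} the vertex $a$ must equal $u_w$ for some $w \in S^{\odd}$. But then the second part of the lemma supplies an index $i \in [r]$ with $dist^\odd(a, x_i) > dist^\even({\bf 0}, x_i)$, directly contradicting the equality established in the previous paragraph. Hence no isomorphism $f$ can exist, and $G^{\even}$ and $G^{\odd}$ are not equivalent.

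I do not expect a genuine obstacle here: the delicate distance computations are exactly what Lemma~\ref{lem:GNonEquivalence} already provides. The only point requiring care is the observation that an isomorphism simultaneously fixes every labelled leaf and preserves graph distance, which together force the image of $u_{\bf 0}$ to satisfy two incompatible conditions. I would also note that the same argument applies verbatim after suppressing all degree-$2$ vertices, since Lemma~\ref{lem:GNonEquivalence} is stated to hold in that case as well; this is what will let us transfer the conclusion to the genuine networks $N^{\even}$ and $N^{\odd}$.
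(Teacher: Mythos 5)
Your argument is correct and is exactly the intended derivation: the paper leaves this corollary's proof implicit, but the same reasoning (an isomorphism fixes each labelled leaf and preserves distances, so $f(u_{\bf 0})$ would have to satisfy the two incompatible conditions of Lemma~\ref{lem:GNonEquivalence}) is spelled out in the proof of Lemma~\ref{lem:NNonEquivalence}. No differences worth noting.
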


The next lemma will be used to show that when we suppress degree-$2$ vertices in $G^{\even}$ and $G^{\odd}$, the resulting graphs $N^{\even}$ and $N^{\odd}$ are networks.

\begin{lemma}\label{lem:nonLeavesShareCycles}
In both $G^{\even}$ and $G^{\odd}$, there exists a single blob containing all non-leaf vertices.
\end{lemma}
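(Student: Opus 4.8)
The plan is to show that, in each of $G^\even$ and $G^\odd$, the subgraph $H$ induced by the non-leaf vertices is $2$-edge-connected; the statement then follows at once. Indeed, the only degree-$1$ vertices of $G^\even$ (and $G^\odd$) are $x_1,\dots,x_r$, and each $x_i$ is joined to the rest of the graph by the single edge $x_iv_{i,0}$, which is therefore a bridge and lies in no $2$-edge-connected subgraph. Hence, once $H$ is known to be $2$-edge-connected, it is a maximal $2$-edge-connected subgraph, i.e.\ a blob; it contains every non-leaf vertex; and it is the unique blob, since every edge outside $H$ is one of the bridges $x_iv_{i,0}$. So the whole task reduces to proving that $H$ is connected and bridgeless. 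I would write the argument for $G^\even$, the case of $G^\odd$ being identical.

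The first step is to record how $H$ decomposes and to relate it to the skeleton $M^\even$. The graph $H$ is the union of the trees $Cat(w)$ (for $w\in S^\even$) and $Lex(i,h)^\even$ (for $i\in[r]$, $h\in\{0,1\}$), glued together exactly at the shared vertices $z_{w,i}$: the tree $Cat(w)$ and the tree $Lex(i,h)^\even$ have a common vertex $z_{w,i}$ precisely when $\cha{w}{i}=h$. Regarding each tree as a single node, joined to another whenever the two trees share a $z$-vertex, recovers exactly the graph $\widehat M^\even$ obtained from $M^\even$ by deleting the leaves $x_i$ (with $Cat(w)\mapsto u_w$ and $Lex(i,h)^\even\mapsto v_{i,h}$, the shared vertex $z_{w,i}$ corresponding to the edge $u_wv_{i,\cha{w}{i}}$). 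Since each tree is internally connected, $H$ is connected as soon as $\widehat M^\even$ is. The combinatorial heart of the proof, and the place where $r\ge4$ is used, is the claim that \emph{for $r\ge4$ the graph $\widehat M^\even$ (and likewise $\widehat M^\odd$) is $2$-connected}; in particular it is connected and stays connected after deleting any single vertex $u_w$ or $v_{i,h}$. I would prove this by a hub argument exploiting the redundancy from $r\ge4$: every $u_w$ has degree $r\ge4$ and every $v_{i,h}$ has degree $2^{r-2}\ge4$, so after deleting one vertex one can still route any two remaining vertices through the all-zero sequence $u_{\bf 0}$ (which reaches all $v_{i,0}$) and through suitably chosen weight-$2$ sequences (which reach the $v_{i,1}$), always keeping enough free coordinates to avoid the deleted vertex.

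It then remains to deduce bridgelessness of $H$, treating the edges by type. Every vertex $z_{w,i}$, and every root $v_{i,1}$, has degree $2$; the two edges at a degree-$2$ vertex are simultaneously bridges or non-bridges, so for each such vertex it suffices to find a path between its two neighbours that avoids it. For $z_{w,i}$ one leaves $Cat(w)$ through another leaf $z_{w,j}$ (with $j\neq i$, which exists because $r\ge4$) and re-enters $Lex(i,\cha{w}{i})^\even$ at a different leaf $z_{w',i}$; this detour exists because $\widehat M^\even - u_w$ is connected. An internal spine edge of $Cat(w)$ splits the caterpillar into two parts, each of which still carries some leaf $z_{w,i}$, and the two sides are reconnected through the other trees because $\widehat M^\even - u_w$ is connected. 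Symmetrically, an internal edge of $Lex(i,h)^\even$ splits it into two parts, each carrying leaves $z_{w,i}$, and these are reconnected through the other trees because $\widehat M^\even - v_{i,h}$ is connected. As every edge of $H$ thus lies on a cycle and $H$ is connected, $H$ is $2$-edge-connected.

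I expect the main obstacle to be precisely the bridgelessness of the internal tree edges. It is tempting to argue that $H$ is $2$-edge-connected simply because its skeleton $\widehat M^\even$ is, but this implication is false in general: blowing up the cut vertex of two triangles sharing a single vertex into an edge produces a bridge, even though the skeleton is $2$-edge-connected. The argument must therefore genuinely use that $\widehat M^\even$ has \emph{no cut vertex}, that is, $2$-vertex-connectivity rather than mere $2$-edge-connectivity of the skeleton, which is exactly the content of the $r\ge4$ claim above and the reason the numerical slack $2^{r-2}\ge4$ is needed.
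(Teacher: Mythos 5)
Your proof is correct, but it takes a genuinely different route from the paper's. The paper argues by exhibiting one explicit cycle: it chooses four even-weight sequences $00*,01*,11*,10*$ with prescribed first two coordinates and forms the closed walk
$u_{00*}\,z_{00*,1}\dots z_{01*,1}\,u_{01*}\,z_{01*,2}\dots z_{11*,2}\,u_{11*}\,z_{11*,1}\dots z_{10*,1}\,u_{10*}\,z_{10*,2}\dots z_{00*,2}\,u_{00*}$,
whose connecting segments run through $Lex(1,0)^\even$, $Lex(2,1)^\even$, $Lex(1,1)^\even$ and $Lex(2,0)^\even$; it then concludes that all the $u_w$ lie in a common blob because any $u_w$ can play any of the four roles (and, implicitly, any pair of coordinates can play the role of the first two, with transitivity of ``lying in a common blob'' chaining everything together), and finally observes that every non-leaf vertex lies on a path between such hubs and hence falls into that blob. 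You instead prove the stronger statement that the union $H$ of all caterpillars and lexicographic trees is itself $2$-edge-connected, by reducing to $2$-vertex-connectivity of the bipartite skeleton obtained from $M^\even$ by deleting the leaves, and then rerouting around each tree edge through the rest of the skeleton. Your remark that $2$-edge-connectivity of the skeleton would \emph{not} suffice, and that one genuinely needs the skeleton minus any single hub to remain connected, is exactly right and is a subtlety the paper's terser argument never has to confront. The paper's approach buys brevity (one concrete cycle plus the blob equivalence); yours buys a cleaner structural conclusion ($H$ is precisely the unique blob) and a reusable skeleton lemma. The only piece you should flesh out before considering the argument complete is the $2$-connectivity claim itself, which you leave as a sketch: the counting does go through (there are $2^{r-3}\geq 2$ even-weight sequences realizing any prescribed pair of coordinate values, so one survives the deletion of any single $u_w$, and every remaining vertex keeps at least $r-1\geq 3$ or $2^{r-2}-1\geq 3$ neighbours), but note that for $G^\odd$ the hub $u_{\bf 0}$ is not available and must be replaced by, say, weight-$1$ sequences.
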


\begin{proof}
Observe that any non-leaf vertex is part of a path between $u_w$ and $v_{i,h}$ for some $w \in {\cal B}_r$, $i \in [r], h \in \{0,1\}$. Furthermore every vertex $v_{i,h}$ appears on a path between $u_w$ and $u_{w'}$ for some $w,w'$.   Therefore it is enough to show that for any $w \neq w'$, $u_w$ and $u_w'$ appear in the same blob.

Let $00*,01*,11*,10*$ be four sequences in $S^{\even}$ such that $\cha{hk*}{1}=h, \cha{hk*}{2} = k$ (such sequences exist as $r \geq 3$).

Then there exists a cycle
$$
u_{00*}z_{00*,1} \dots  z_{01*,1}u_{01*}z_{01*,2} \dots z_{11*,2}u_{11*}z_{11*,1} \dots z_{10*,1}u_{10*}z_{10*,2} \dots z_{00*,2}u_{00*}.
$$
Here the path between $z_{00*,1}$ and  $z_{01*,1}$ passes through $Lex({1,0})^\even$, the path between $z_{01*,2}$ and  $z_{11*,2}$ passes through $Lex({2,1})^\even$, the path between $z_{11*,1}$ and  $z_{10*,1}$ passes through $Lex({1,1})^\even$, and   the path between $z_{10*,2}$ and  $z_{00*,2}$ passes through $Lex({2,0})^\even$.
See Figure~\ref{fig:exampleCycle} for an example when $00* = 0000, 01* = 0101, 11* = 1100$ and $10* = 1001$.)

As $00*,01*,11*,10*$ appear on a cycle, they appear in the same blob of $G^{\even}$. Moreover as any node $u_w$ could fill the role of one of $00*,01*,11*,10*$, we have that all $u_w$ appear in the same blob. A similar argument holds for $G^{\odd}$.
\end{proof}

\begin{figure}
\begin{center}
\includegraphics[scale = .9]{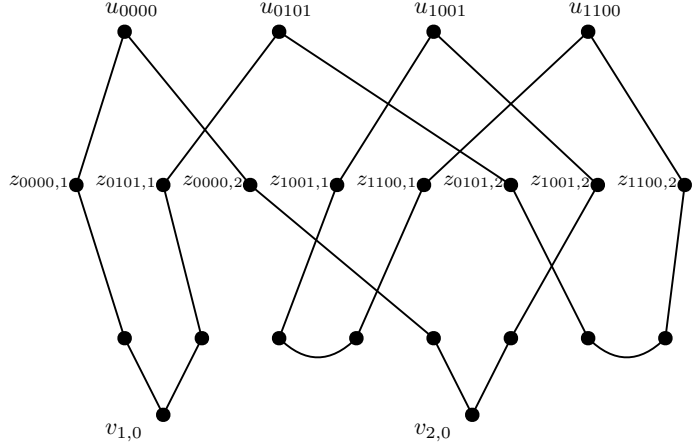}
\end{center}
\label{fig:exampleCycle}
\caption{A cycle containing the vertices $u_{0000}, u_{u_0101}, u_{1001}, u_{1100}$ in  $G^{\even}$ for the case $r = 4$.}
\end{figure}

Now we are ready to construct the networks  $N^{\even}$ and $N^{\odd}$:
Let  $N^{\even}$ be derived from $G^{\even}$ by suppressing all nodes of degree $2$. Similarly, let  $N^{\odd}$ be derived from $G^{\odd}$ by suppressing all nodes of degree $2$.

\medskip

\begin{lemma}\label{lem:NNetwork}
$N^{\even}$ and $N^{\odd}$ are networks on $X$.
\end{lemma}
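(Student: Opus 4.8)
The goal is to show that $N^{\even}$ and $N^{\odd}$, obtained from $G^{\even}$ and $G^{\odd}$ by suppressing all degree-$2$ vertices, are genuine unrooted phylogenetic networks on $X$. Recall the definition: a simple undirected graph with leaves bijectively labelled by $X$ is a network precisely when contracting each blob to a single vertex yields a phylogenetic tree, equivalently when every cut-edge induces a unique partition of the leaves, and the leaves are exactly the degree-$1$ vertices labelled by $X$.

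The plan is to verify the defining conditions one at a time. First I would confirm that suppressing degree-$2$ vertices is well-defined and produces a \emph{simple} graph: the degree-$2$ vertices of $G^{\even}$ (respectively $G^{\odd}$) are exactly the vertices $z_{w,i}$ and the roots $v_{i,1}$, as noted just before the lemma, and suppressing a maximal path of degree-$2$ vertices replaces it by a single edge. I would need to check that no multi-edges or loops are created in this process; intuitively this holds because each such suppressed path lies on a longer path between genuine degree-$3$ vertices (the $u_w$'s and internal $Lex$/$Cat$ vertices), and distinct such paths have distinct endpoints. Second, I would identify the leaves of $N^{\even}$: the only degree-$1$ vertices are the labelled leaves $x_1,\dots,x_r$, since deleting $x_i$'s neighbour does not occur (we only suppress degree-$2$ vertices, and each $x_i$ sits at the end of an edge whose other endpoint $v_{i,0}$ either has degree $3$ or becomes a degree-$3$ vertex after suppression). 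So the labelling by $X$ is a bijection onto the leaf set, as required.

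The heart of the argument is the blob/cut-edge condition, and this is exactly where Lemma~\ref{lem:nonLeavesShareCycles} does the work. That lemma establishes that all non-leaf vertices of $G^{\even}$ (and $G^{\odd}$) lie in a single blob. Suppressing degree-$2$ vertices does not merge or split blobs — it only shortens paths within them — so after suppression the single blob containing all non-leaf vertices persists in $N^{\even}$. Consequently the only cut-edges of $N^{\even}$ are the $r$ pendant edges joining each $x_i$ to the (suppressed image of the) blob. Contracting the unique blob to a single vertex $c$ therefore yields a star $K_{1,r}$ with centre $c$ and the $r$ leaves $x_1,\dots,x_r$; this is a phylogenetic tree on $X$ (it has no degree-$2$ vertices, as $r \geq 4$ forces $c$ to have degree $r \geq 4$). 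Equivalently, each cut-edge $x_i c$ induces the unique partition $\{x_i\} \mid X \setminus \{x_i\}$ of the leaves, so all cut-edge-induced partitions are distinct. Both formulations of the network condition are thus satisfied.

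The main obstacle I anticipate is the bookkeeping in the first step: showing that suppressing degree-$2$ vertices yields a \emph{simple} graph rather than one with parallel edges, which matters because the definition of network requires a simple graph. I would need to argue that after suppressing the degree-$2$ chain through some $z_{w,i}$ (or through a $v_{i,1}$ root), the two degree-$3$ endpoints were not already adjacent by another route of degree-$3$ vertices, and that two different chains do not collapse onto the same pair of endpoints. This should follow from the tree structure of each $Cat(w)$ and each $Lex(i,h)$ together with the way they are glued only at the $z_{w,i}$ vertices, but it requires care to state cleanly. Everything else — the leaf count, the degree conditions, and the single-blob property — follows directly from the construction and from Lemma~\ref{lem:nonLeavesShareCycles}.
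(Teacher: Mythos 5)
Your proposal is correct and follows essentially the same route as the paper: both arguments reduce the network condition to showing $N^{\even}$ (resp.\ $N^{\odd}$) has a single blob containing all non-leaf vertices, invoking Lemma~\ref{lem:nonLeavesShareCycles} and the observation that suppressing degree-$2$ vertices keeps degree-$3$ vertices in the same blob. Your extra attention to simplicity (no parallel edges created by suppression) is a point the paper's proof passes over silently, but it does not change the structure of the argument.
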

\begin{proof}
We show that  $N^{\even}$ is a network on $X$ (the proof for  $N^{\odd}$ is similar). By construction, all vertices in  $N^{\even}$ have degree $1$ or $3$ and the leaves are bijectively labelled  with the elements of $X$.
It remains to show  that contracting each blob into a single vertex gives a tree with no degree-$2$ vertices, which we will do by showing that  $N^{\even}$ has only one blob.
By Lemma~\ref{lem:nonLeavesShareCycles}, all non-leaf vertices in  $G^{\even}$ are part of the same blob in  $G^{\even}$
Observe that if two degree-$3$ vertices are in the same blob, then they are still in the same blob after contracting degree-$2$ vertices.
Thus, all non-leaf vertices in  $N^{\even}$ are part of the same blob, and thus  $N^{\even}$  has a single blob, as required.
\end{proof}

  \medskip

\begin{lemma}\label{lem:NNonEquivalence}
$N^{\even}$ and $N^{\odd}$ are not equivalent.
\end{lemma}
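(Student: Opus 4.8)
The plan is to reuse, essentially verbatim, the distance argument that already yielded the Corollary, but now applied to the suppressed graphs $N^{\even}$ and $N^{\odd}$ rather than to $G^{\even}$ and $G^{\odd}$. The key enabling fact is the closing sentence of Lemma~\ref{lem:GNonEquivalence}, which asserts that both of its numbered claims remain valid after all degree-$2$ vertices are suppressed; thus they may be invoked with $dist^{\even}$ and $dist^{\odd}$ interpreted as distances in $N^{\even}$ and $N^{\odd}$, and with ${\bf 0}$ standing for the vertex $u_{\bf 0}$ as in that lemma.

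First I would suppose, for contradiction, that $f : V(N^{\even}) \to V(N^{\odd})$ is an isomorphism. Before anything else I would note that the vertex $u_{\bf 0}$, where ${\bf 0}$ is the all-$0$ sequence of $S^{\even}$, survives the suppression: it is an internal vertex of the caterpillar $Cat({\bf 0})$ and hence has degree $3$, so it is a genuine vertex of $N^{\even}$. Since any isomorphism is label-preserving we have $f(x_i) = x_i$ for every $i \in [r]$, and since any isomorphism preserves graph distances we obtain $dist^{\odd}(f(u_{\bf 0}), x_i) = dist^{\even}(u_{\bf 0}, x_i)$ for every $i \in [r]$.

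The contradiction then comes from playing the two claims of Lemma~\ref{lem:GNonEquivalence} against each other. Taking $i = 1$ in the equality above and applying the first claim (in its post-suppression form) to the vertex $a = f(u_{\bf 0})$, I would conclude that $f(u_{\bf 0}) = u_w$ for some $w \in S^{\odd}$. But then the second claim supplies an index $i \in [r]$ with $dist^{\odd}(f(u_{\bf 0}), x_i) > dist^{\even}({\bf 0}, x_i)$, directly contradicting the equality established above, which forces equality at \emph{every} index $i$. Hence no isomorphism $f$ can exist, and so $N^{\even} \not\sim N^{\odd}$.

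I expect no genuine obstacle here, since the entire argument is a transcription of the proof of the Corollary. The only points needing a moment's care are checking that $u_{\bf 0}$ is not destroyed by suppression (it has degree $3$) and observing that the distance statements of Lemma~\ref{lem:GNonEquivalence} are precisely the ones we are entitled to invoke after suppression --- which is exactly what that lemma's final sentence grants.
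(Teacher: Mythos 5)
Your proof is correct and follows essentially the same route as the paper's: both deduce from the two claims of Lemma~\ref{lem:GNonEquivalence} (in their post-suppression form) that no vertex of $N^{\odd}$ can have the same distance to every leaf $x_i$ as $u_{\bf 0}$ has in $N^{\even}$, which rules out any label- and edge-preserving bijection. Your extra check that $u_{\bf 0}$ has degree $3$ and therefore survives the suppression is a small detail the paper leaves implicit.
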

\begin{proof}
As  $N^{\even}$ and $N^{\odd}$ are derived from  $G^{\even}$ and $G^{\odd}$ by suppressing degree-$2$ vertices, Lemma~\ref{lem:GNonEquivalence} implies that there is no vertex in  $N^{\odd}$ that has the same distance from each leaf $x_i$ as  $u_{\bf 0}$ has from $x_i$ in $N^{\even}$.

This implies that there is no isomorphism between $N^{\even}$ and $N^{\odd}$, as if $f$ is edge-preserving and label-preserving then the distance between $u_{\bf 0}$ and $x_i$ is equal to the distance between $f(u_{\bf 0})$ and $f(x_i)=x_i$.
\end{proof}

 \medskip

\begin{lemma}\label{lem:NiEquivalence}
For each $i \in [r]$, $(N^{\even})_{x_i}$ and $(N^{\odd})_{x_i}$ are equivalent.
\end{lemma}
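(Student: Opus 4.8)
The plan is to reduce the statement to Lemma~\ref{lem:GiEquivalence}, which already supplies an isomorphism between $G^{\even}-x_i$ and $G^{\odd}-x_i$. The key observation is that \emph{removing} the leaf $x_i$ (deleting it and then suppressing the resulting degree-$2$ vertices) yields, up to isomorphism, the same graph whether we start from the network $N^{\even}$ or from the pre-suppression graph $G^{\even}$. First I would record that $(N^{\even})_{x_i}$ is well defined: by Lemma~\ref{lem:NNetwork} the graph $N^{\even}$ is a network and hence has no degree-$2$ vertices, so the removal operation applies, and by definition $(N^{\even})_{x_i}$ is obtained from $N^{\even}-x_i$ by suppressing every degree-$2$ vertex.

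The heart of the argument is that suppression of degree-$2$ vertices commutes with deletion of the leaf $x_i$. Recall that $N^{\even}$ is obtained from $G^{\even}$ precisely by suppressing its degree-$2$ vertices, an operation that is a homeomorphism: it only un-subdivides edges and leaves every vertex of degree $\neq 2$, together with its incidences, unchanged. Crucially, the neighbour $v_{i,0}$ of $x_i$ has degree $3$ in $G^{\even}$, so neither $x_i$ nor the edge $x_iv_{i,0}$ is affected when passing from $G^{\even}$ to $N^{\even}$. Consequently $N^{\even}-x_i$ and $G^{\even}-x_i$ are homeomorphic, and since the fully suppressed graph (the unique representative of a homeomorphism class having no degree-$2$ vertices) is a topological invariant, suppressing all degree-$2$ vertices in either graph yields the same result. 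Thus $(N^{\even})_{x_i}$ is isomorphic to the graph obtained from $G^{\even}-x_i$ by suppressing all degree-$2$ vertices, and symmetrically $(N^{\odd})_{x_i}$ is isomorphic to the graph obtained from $G^{\odd}-x_i$ by suppressing all degree-$2$ vertices.

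Finally I would invoke Lemma~\ref{lem:GiEquivalence}: since $G^{\even}-x_i$ and $G^{\odd}-x_i$ are equivalent, any isomorphism between them sends degree-$2$ vertices to degree-$2$ vertices and therefore descends to an isomorphism between their fully suppressed forms, because isomorphic graphs have isomorphic homeomorphism reductions. Composing this with the two isomorphisms from the previous paragraph gives $(N^{\even})_{x_i}\sim(N^{\odd})_{x_i}$, as required.

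I expect the main obstacle to be making the commutativity step fully rigorous, that is, justifying that first suppressing degree-$2$ vertices (to form $N^{\even}$) and then removing $x_i$ produces the same graph as removing $x_i$ from $G^{\even}$ and then suppressing. The care needed is to confirm that the only degree change caused by deleting $x_i$ occurs at $v_{i,0}$ (which drops from degree $3$ to degree $2$ and is then suppressed in either order), that no degree-$2$ vertex newly created by the deletion interferes with the original suppressions, and that any resulting multi-edges are treated consistently on both sides. All of this follows from the confluence of degree-$2$ suppression together with the fact that the neighbour of $x_i$ is a branch vertex, but it should be stated explicitly rather than left implicit.
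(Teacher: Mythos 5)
Your proposal is correct and follows essentially the same route as the paper: both reduce the claim to Lemma~\ref{lem:GiEquivalence} by observing that $(N^{\even})_{x_i}$ and $(N^{\odd})_{x_i}$ are obtained from $G^{\even}-x_i$ and $G^{\odd}-x_i$ by suppressing all degree-$2$ vertices, and then push the isomorphism of Lemma~\ref{lem:GiEquivalence} through the suppression (the paper verifies the descended map is edge-preserving by counting paths whose internal vertices have degree $2$, where you invoke the general fact that isomorphisms respect homeomorphism reduction). The commutativity step you flag as the main obstacle is asserted in one sentence in the paper, so your treatment is if anything more careful.
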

\begin{proof}
Recall the definitions of $G^{\even} - x_i$ and $G^{\odd} - x_i$, and observe that $(N^{\even})_{x_i}$ (respectively, $(N^{\odd})_{x_i}$) can be derived from $G^{\even} - x_i$  ($G^{\odd} - x_i$) by suppressing degree-$2$ vertices. By Lemma~\ref{lem:GiEquivalence}, there exists an isomorphism $f'$ between $G^{\even} - x_i$ and $G^{\odd} - x_i$. So define a bijective function $f: V((N^{\even})_{x_i}) \rightarrow V((N^{\odd})_{x_i})$   by setting $f(a) = f'(a)$ for all $u \in V((N^{\even})_{x_i})$. Note that if $a$ does not have degree $2$ in $G^{\even} - x_i$, $f'(a)$ also does not have degree $2$ in $G^{\odd} - x_i$.   Thus if $a \in  V((N^{\even})_{x_i})$ then $f(a) = f'(a) \in V((N^{\odd})_{x_i})$, and so $f$ is well-defined.

By construction, $f$ is label-preserving. To see that $f$ is edge-preserving, consider some $a,b \in V((N^{\even})_{x_i})$. Observe that the number of edges between $a$ and $b$ in $(N^{\even})_{x_i}$ is equal to the number of paths between $a$ and $b$ in $G^{\even} - x_i$ whose internal vertices have degree $2$. As $f'$ is an isomorphism, this is equal to the number of paths between $f'(a)$ and $f'(b)$ in $G^{\odd} - x_i$ whose internal vertices have degree $2$, which in turn is equal to the number of edges between $f(a)$ and $f(b)$ in  $(N^{\odd})_{x_i}$. Thus, $f$ is edge-preserving, and so $f$ is an isomorphism.
\end{proof}

Lemmas~\ref{lem:NNetwork},~\ref{lem:NNonEquivalence} and ~\ref{lem:NiEquivalence} imply the following theorem:

 \medskip

\begin{theorem}
For any $r \geq 4$, there exist networks $N^{\even}$, $N^{\odd}$ on $X$ with $|X| = r$, such that $N^{\odd}$ is a leaf-reconstruction of $N^{\even}$, but $N^{\even}$ and $N^{\odd}$ are not equivalent. Thus, $N^{\even}$ is not leaf-reconstructible.
\end{theorem}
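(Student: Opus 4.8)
The plan is to assemble the theorem directly from the three lemmas established immediately above, since each of the three properties we need — validity as networks, equivalence of the decks, and global non-equivalence — is exactly one of those lemmas. First I would unwind the definition of leaf-reconstruction from Section~\ref{sec:prelim}: $N^{\odd}$ is a leaf-reconstruction (i.e.\ an $X$-reconstruction) of $N^{\even}$ precisely when $(N^{\odd})_{x} \sim (N^{\even})_{x}$ for every $x \in X$, and $N^{\even}$ is leaf-reconstructible precisely when every such reconstruction is isomorphic to $N^{\even}$. Thus the theorem reduces to exhibiting a single reconstruction of $N^{\even}$ that is not isomorphic to $N^{\even}$.

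Second, I would invoke Lemma~\ref{lem:NNetwork} to confirm that $N^{\even}$ and $N^{\odd}$ are genuine (binary) networks on $X$ with $|X| = r$; this guarantees they are legitimate inputs for the reconstruction question and is what licenses us to speak of their $X$-decks at all. Then Lemma~\ref{lem:NiEquivalence} gives $(N^{\even})_{x_i} \sim (N^{\odd})_{x_i}$ for each $i \in [r]$, which by the definition above is exactly the statement that $N^{\odd}$ is a leaf-reconstruction of $N^{\even}$. Finally, Lemma~\ref{lem:NNonEquivalence} supplies $N^{\even} \not\sim N^{\odd}$. Combining these, $N^{\odd}$ is a leaf-reconstruction of $N^{\even}$ that is not equivalent to it, so not all $X$-reconstructions of $N^{\even}$ are isomorphic to one another; by definition $N^{\even}$ is not leaf-reconstructible. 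The range $r \geq 4$ is inherited directly from the hypotheses of the three lemmas.

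Since the three lemmas already carry all the content, at the level of this theorem there is no genuine obstacle beyond bookkeeping with the definitions. If instead one were proving the whole result from scratch, the real difficulty is concentrated in Lemma~\ref{lem:NNonEquivalence} together with its supporting Lemma~\ref{lem:GNonEquivalence}: the two networks have identical decks, so any invariant separating them must be a \emph{global} one that is destroyed by deleting a single leaf. The construction pins this on the distance profile of the vertex $u_{\bf 0}$ --- the claim being that no vertex of $N^{\odd}$ realises the same list of distances to all leaves $x_1, \dots, x_r$ as $u_{\bf 0}$ does in $N^{\even}$ --- and verifying this requires the path-contraction argument of Lemma~\ref{lem:GNonEquivalence}, which shows that any candidate $u_w$ with $w$ of odd weight is strictly too far from some $x_i$. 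Ensuring that this distance bound survives the suppression of degree-$2$ vertices is where I would expect the main care to be needed.
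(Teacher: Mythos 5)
Your proposal matches the paper exactly: the theorem is stated there as an immediate consequence of Lemmas~\ref{lem:NNetwork}, \ref{lem:NNonEquivalence} and~\ref{lem:NiEquivalence}, combined with the definitions of $X$-reconstruction and leaf-reconstructibility, which is precisely the assembly you describe. Your closing remarks correctly identify where the real work lives (Lemmas~\ref{lem:GNonEquivalence} and~\ref{lem:NNonEquivalence}), but at the level of this theorem nothing further is needed.
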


\begin{figure}[H]
\begin{subfigure}{\textwidth}
  \tikzstyle{black}=[shape=circle, fill=black, radius=0.09, inner sep = 2.3]
\centering\begin{tikzpicture}[ very thick, scale = 0.8]
\foreach \eps in {0.45}
{
\foreach \xi in {1,2,3,4}
{
\node at (4*\xi-3.5,1)(root\xi0)[black]{};
\node at (4*\xi-3.7-1*\eps,1-0.7*\eps){$v_{\xi,0}^\even$};
\node at (4*\xi-3.5,0)[black]{}
 edge (root\xi0);
\node at (4*\xi-3.5,0-\eps){$x_{\xi}$};
\node at (4*\xi-4,1.5)(root\xi070)[black]{}
 edge (root\xi0);
\node at (4*\xi-3,1.5)(root\xi071)[black]{}
 edge (root\xi0);
\node at (4*\xi-2.3,1.5)(root\xi170)[black]{};
\node at (4*\xi-1.5,1.5)(root\xi171)[black]{}
 edge[bend left=20] (root\xi170);
}
\foreach \ha in {0,1}
\foreach \hb in {0,1}
\foreach \hc in {0,1}
\foreach \hd in {0,1}
{
\ifthenelse{\intcalcMod{\ha + \hb + \hc + \hd}{2} = 0}{
\node at (8*\ha + 4*\hb + 2*\hc,6)(root)[black]{}
edge (root1\ha7\hb)
edge (root2\hb7\ha)
;
\node at (8*\ha + 4*\hb + 2*\hc + 0.75,5.75)(root1)[black]{}
edge (root)
edge (root3\hc7\ha)
edge (root4\hd7\ha)
;
\node at (8*\ha + 4*\hb + 2*\hc,6+\eps)(root){$u_{\ha \hb \hc \hd}$};
}{}
}
}
\end{tikzpicture}
\caption{$N^{\even}$}\label{fig:NEvenBinary}
\end{subfigure}

\begin{subfigure}{\textwidth}
\label{fig:hardnessGadgetForX}
  \tikzstyle{black}=[circle, fill=black, radius=0.09, inner sep = 2.3]
\centering\begin{tikzpicture}[ very thick, scale = 0.8]
\foreach \eps in {0.45}
{
\foreach \xi in {1,2,3,4}
{
\node at (4*\xi-3.5,1)(root\xi0)[black]{};
\node at (4*\xi-3.7-1*\eps,1-0.7*\eps){$v_{\xi,0}^\odd$};
\node at (4*\xi-3.5,1-1)[black]{}
 edge (root\xi0);
\node at (4*\xi-3.5,1-1-\eps){$x_{\xi}$};

\node at (4*\xi-4,1.5)(root\xi070)[black]{}
 edge (root\xi0);
\node at (4*\xi-3,1.5)(root\xi071)[black]{}
 edge (root\xi0);
\node at (4*\xi-2.3,1.5)(root\xi170)[black]{};
\node at (4*\xi-1.5,1.5)(root\xi171)[black]{}
 edge[bend left=20] (root\xi170);
}
\foreach \ha in {0,1}
\foreach \hb in {0,1}
\foreach \hc in {0,1}
\foreach \hd in {0,1}
{
\ifthenelse{\intcalcMod{\ha + \hb + \hc + \hd}{2} = 1}{
\node at (8*\ha + 4*\hb + 2*\hc,6)(root)[black]{}
edge (root1\ha7\hb)
edge (root2\hb7\ha)
;
\node at (8*\ha + 4*\hb + 2*\hc + 0.75,5.75)(root1)[black]{}
edge (root)
edge (root3\hc7\ha)
edge (root4\hd7\ha)
;
\node at (8*\ha + 4*\hb + 2*\hc,6+\eps)(root){$u_{\ha \hb \hc \hd}$};
}{}
}
}
\end{tikzpicture}
\caption{$N^{\odd}$}\label{fig:NOddBinary}
\end{subfigure}
\caption{ Binary example of  $N^{\even}$ and $N^{\odd}$  for the case when when $r=4$.  The node $u_{0000}$ in $N^{\even}$ has distance $d_1 = 3$ from $x_1$, $d_2 = 3$ from $x_2$, $d_3= 4$ from $x_3$, and $d_4 = 4$ from $x_4$.
Moreover there is no node in $N^{\odd}$  with distance $d_i$ from $x_i$ for each $i \in [4]$.  Thus $N^{\even}$ and $N^{\odd}$ are not equivalent.
However, for each $i \in [4]$ the multigraphs $(N^{\even})_{x_i}$ and $(N^{\odd})_{x_i}$ are equivalent, using an isomorphism that maps each vertex $u_w$ to $u_{\inv{w}{i}}$.
}
\end{figure}
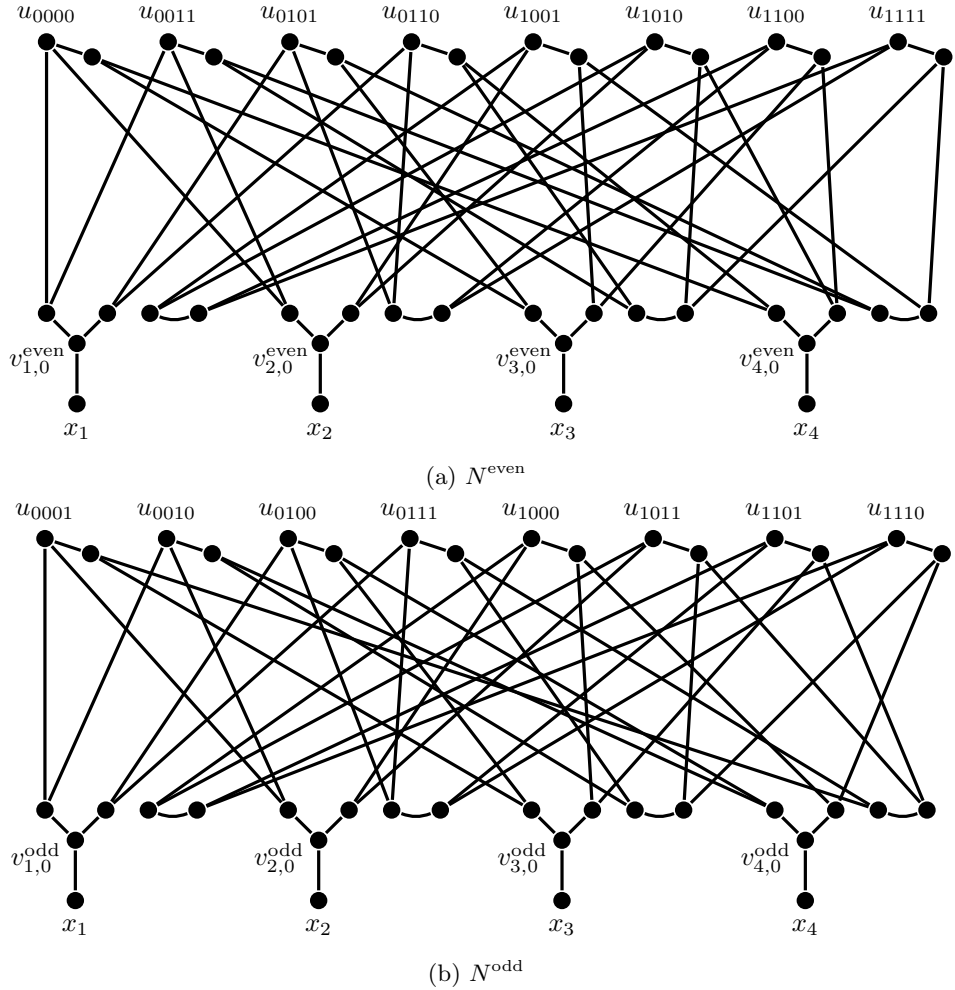

\section{Concluding remarks}\label{sec:conclusion}
Although we have shown that not all phylogenetic networks with five or
more leaves are leaf-reconstructible, this does not mean that
reconstructing networks from subnetworks is completely hopeless. There
are already some positive results for interesting restricted network
classes~\cite{vIM2017}. Moreover, since the presented counter examples
are very complex, it is certainly possible that other reasonable
network classes are also leaf-reconstructible.

For example, while it is known that all networks with at least five
leaves and~$|E|-|V|\leq 3$ are leaf-reconstructible, the counter
examples presented in this paper have $|E|-|V|=(2^{r-1}-1)(2r-1)-1$,
with~$r$ the number of leaves. Hence, whether networks with
$3<|E|-|V|<(2^{r-1}-1)(2r-1)-1$ are leaf-reconstructible is still
open. In particular, is it possible to construct counter examples
where $|E|-|V|$ is bounded by a linear function of the number of
leaves?

\end{document}